\newtheorem{thm}{Theorem}[section]
\newtheorem{prop}[thm]{Proposition}
\newtheorem{lem}[thm]{Lemma}
\newtheorem{quest}[thm]{Question}
\theoremstyle{definition}
\newtheorem{defn}[thm]{Definition}
\newtheorem{notn}[thm]{Notation}
\theoremstyle{remark}
\newtheorem{rem}[thm]{Remark}
\let\c@equation\c@thm
\numberwithin{equation}{section}
\title{Intermediate arithmetic operations on ordinal numbers}
\author{Harry Altman}
\date{May 22, 2016}
\subjclass[2010]{03E10}
\keywords{ordinal arithmetic, natural operations, Hessenberg operations,
Jacobsthal multiplication, natural exponentiation}
\begin{document}

\begin{abstract}
There are two well-known ways of doing arithmetic with ordinal numbers: the
``ordinary'' addition, multiplication, and exponentiation, which are defined by
transfinite iteration; and the ``natural'' (or ``Hessenberg'') addition and
multiplication (denoted $\oplus$ and $\otimes$), each satisfying its own set of
algebraic laws.  In 1909, Jacobsthal considered a third, intermediate way of
multiplying ordinals
(denoted $\times$), defined by transfinite iteration of natural addition, as
well as the notion of exponentiation defined by transfinite iteration of his
multiplication, which we denote $\alpha^{\times\beta}$.  (Jacobsthal's
multiplication was later rediscovered by Conway.)  Jacobsthal showed these
operations too obeyed algebraic laws.  In this paper, we pick up where
Jacobsthal left off by
considering the notion of exponentiation obtained by transfinitely iterating
natural multiplication instead; we will denote this $\alpha^{\otimes\beta}$.
We show that $\alpha^{\otimes(\beta\oplus\gamma)} =
(\alpha^{\otimes\beta})\otimes(\alpha^{\otimes\gamma})$ and that
$\alpha^{\otimes(\beta\times\gamma)}=(\alpha^{\otimes\beta})^{\otimes\gamma}$;
note the use of Jacobsthal's multiplication in the latter.  We
also demonstrate the impossibility of defining a ``natural exponentiation''
satisfying reasonable algebraic laws.
\end{abstract}

\maketitle

\section{Introduction}
In this paper, we introduce a new form of exponentiation of ordinal numbers,
which we call \emph{super-Jacobsthal exponentiation}, and study its properties.
We show it satisfies two analogues of the usual laws of exponentiation.  These
laws relate super-Jacobsthal exponentiation to other previously studied
operations on the ordinal numbers: natural addition, natural multiplication, and
Jacobsthal's multiplication.  We also show that there is no ``natural
exponentiation'' analogous to natural addition and natural multiplication.

There are two well-known ways of doing arithmetic with ordinal
numbers.  Firstly, there are the ``ordinary'' addition, multiplication, and
exponentiation.  These are defined by starting with the successor operation $S$
and transfinitely iterating; $\alpha+\beta$ is defined by applying to $\alpha$
the successor operation $\beta$-many times; $\alpha\beta$ is $\alpha$ added to
itself $\beta$-many times; and $\alpha^\beta$ is $\alpha$ multiplied by itself
$\beta$-many times.  These also have order-theoretic definitions.

There are also infinitary versions of ordinary addition and ordinary
multiplication, defined for families of operands with a well-ordered index set;
using these, one can write
\[ \alpha\beta = \sum_{i<\beta} \alpha;\qquad
\alpha^\beta = \prod_{i<\beta} \alpha. \]
These can be defined either recursively or order-theoretically.

The ordinary operations obey some of the usual relations between arithmetic
operations:
\begin{enumerate}
\item Associativity of addition: $\alpha+(\beta+\gamma)=(\alpha+\beta)+\gamma$.
\item Left-distributivity of multiplication over addition:
$\alpha(\beta+\gamma)=\alpha\beta+\alpha\gamma$.
\item Associativity of multiplication:
$\alpha(\beta\gamma)=(\alpha\beta)\gamma$.
\item Exponentiation converts addition to multiplication:
$\alpha^{\beta+\gamma}=\alpha^\beta \alpha^\gamma$.
\item Exponential of a product is iterated exponentiation:
$\alpha^{\beta\gamma}=(\alpha^\beta)^\gamma$.
\end{enumerate}
Note that these operations are not commutative; for instance,
$1+\omega=\omega\ne\omega+1$ and $2\omega=\omega\ne\omega2$.  Note further that
distributivity does not work on the right; for instance,
\[ (1+1)\omega = \omega \ne \omega2 = (1\omega) + (1\omega).\]
The infinitary versions of these operations also satisfy analogous laws, which
we will detail later.

Then there are the ``natural'' addition and multiplication, sometimes known as
the Hessenberg operations \cite[pp. 73--81]{hdf}, which we will denote by
$\alpha\oplus\beta$ and $\alpha\otimes\beta$, respectively.  Natural addition
and multiplication can be described as adding and multiplying ordinals as if
they were ``polynomials in $\omega$''; see the next section for a more formal
definition.  These are the operations with which the ordinal numbers embed into
the surreal numbers \cite{ONAG}.  They also have order-theoretic definitions,
due to Carruth \cite{carruth}; see De Jongh and Parikh \cite{wpo} for more on
this.

The natural operations also have infinitary versions, but they are less
well-behaved; see Section~\ref{infsec}.

Now, the operations in the ordinary family were formed by transfinite
iteration; but we can transfinitely iterate the natural operations as well.
E.~Jacobsthal introduced a new sort of multiplication, which he denoted by
$\alpha\times\beta$, by transfinitely iterating natural addition; we call it
``Jacobsthal multiplication''.  It is in a sense intermediate between ordinary
multiplication and natural multiplication.  In fact, one has the inequality
\[ \alpha\beta \le \alpha\times\beta \le \alpha\otimes\beta \]
for all ordinals $\alpha$ and $\beta$.  Jacobsthal then went on and defined a
new form of exponentiation based on transfinitely iterating Jacobsthal
multiplication.  He denoted it by $\alpha^{\underline{\beta}}$, but we will
denote it by $\alpha^{\times\beta}$.  One may consider infinitary Jacobsthal
multiplication as well, so that
\[ \alpha\times\beta = \bigoplus_{i<\beta} \alpha\quad\mathrm{and}\quad
\alpha^{\times\beta} = \bigtimes_{i<\beta} \alpha. \]

Jacobsthal's operations have been rediscovered several times.  In the 1980s,
Jacobsthal's multiplication was rediscovered by Conway and discussed by Gonshor
\cite{gonshor2} and by Hickman \cite{hickman}; as such it has also been referred
to as ``Conway multiplication'', though this name is used also of other
operations.  Both of Jacobsthal's operations were also later rediscovered by
Abraham and Bonnet \cite{1999}.

Just as we may transfinitely iterate natural addition, so may we transfinitely
iterate natural multiplication.  We call the resulting operation
\emph{super-Jacobsthal exponentiation}, and denote it $\alpha^{\otimes\beta}$.
Another way of stating this, again, is that
\[ \alpha^{\otimes\beta} = \bigotimes_{i<\beta} \alpha. \]
This type of exponentiation was previously considered briefly by De Jongh and
Parikh \cite{wpo}, but has otherwise been mostly unexplored.

There are quite a few different notions of addition, multiplication, and
exponentiation being considered here, so we will summarize them with a table to
help clarify the relations between them; see Table~\ref{thetable}.

\begin{table}[htb]
\caption{Each operation is the transfinite iteration of the one above it,
yielding three vertical families of operations, in addition to the diagonal
family of natural operations.  Each operation not on the diagonal,
being a transfinite iteration, is continuous in $\beta$.  In addition, each
operation is pointwise less-than-or-equal-to those on its right; see
Section~\ref{compar}.}
\label{thetable}
\begin{tabular}{rcccc}
\rotatebox{-45}{Natural operations $\rightarrow$} &
\rotatebox{-90}{$S$-based $\rightarrow$} & 
\rotatebox{-90}{$\oplus$-based $\rightarrow$} &
\rotatebox{-90}{$\otimes$-based $\rightarrow$} & \\
\cline{2-2}
\multicolumn{1}{r|}{Successor} &
\multicolumn{1}{c|}{\begin{tabular}{c}
	Successor \\ $S\alpha$ \end{tabular}}
& & & \\
\cline{2-3}
\multicolumn{1}{r|}{Addition} &
\multicolumn{1}{c|}{\begin{tabular}{c}
	Ordinary \\ $\alpha+\beta$ \end{tabular}} &
\multicolumn{1}{c|}{\begin{tabular}{c}
	Natural \\ $\alpha\oplus\beta$ \end{tabular}} & & \\
\cline{2-4}
\multicolumn{1}{r|}{Multiplication} &
\multicolumn{1}{c|}{\begin{tabular}{c}
	Ordinary \\ $\alpha\beta$ \end{tabular}} &
\multicolumn{1}{c|}{\begin{tabular}{c}
	Jacobsthal \\ $\alpha\times\beta$ \end{tabular}} &
\multicolumn{1}{c|}{\begin{tabular}{c}
	Natural \\ $\alpha\otimes\beta$ \end{tabular}} & \\
\cline{2-5}
\multicolumn{1}{r|}{Exponentiation} &
\multicolumn{1}{c|}{\begin{tabular}{c}
	Ordinary \\ $\alpha^\beta$ \end{tabular}} &
\multicolumn{1}{c|}{\begin{tabular}{c}
	Jacobsthal \\ $\alpha^{\times\beta}$ \end{tabular}} &
\multicolumn{1}{c|}{\begin{tabular}{c}
	Super-J. \\ $\alpha^{\otimes\beta}$ \end{tabular}} &
\multicolumn{1}{c|}{\begin{tabular}{c}
	None \\ --- \end{tabular}} \\
\cline{2-5}
\end{tabular}
\end{table}

Note that there is no natural exponentiation to continue the ``diagonal'' family
of natural operations.  We will prove this in Section~\ref{noexpsec}.  (A
version of this theorem was also proven independently by Asper\'o and Tsaprounis
around the same time this paper was being written \cite{longreals}.  Their
desiderata for natural exponentiation are slightly different, but the method of
proof is essentially the same.)

One could continue any of these vertical families further, into higher hyper
operations, as discussed in \cite[pp.  66--79]{bachmann}, but we will not
discuss that possibility here for several reasons, among them that higher hyper
operations lack algebraic properties.

Our main interest here is in the algebraic laws sastisfied by these various
operations, analogous to the algebraic laws satisfied by the ordinary operations
discussed earlier.  Such laws are already known for the natural and Jacobsthal
operations; see Section~\ref{oldsec}.  The main result of this paper is that
super-Jacobsthal exponentiation also satisfies such laws; see
Section~\ref{newsec} for the details.

Before we continue discussing these operations and their laws in more detail,
let us conclude this section with Table~\ref{anothertable} and
Table~\ref{tableinf}, which list out all the relevant algebraic laws in a way
that shows the relations between them.  Table~\ref{anothertable} includes the
finitary versions, while Table~\ref{tableinf} has the infinitary versions.

\begin{table}[htb]
\caption{A table of the (finitary) algebraic laws described in this paper.  Each
law has been placed into one of the three vertical families in
Table~\ref{thetable} based on the ``main'' operation involved, i.e., whichever
one is in the bottom-most row in Table~\ref{thetable} -- note that many of these
laws relate operations in different vertical families, and so would go in more
than one column without this choice of convention.  In addition, the operations
$\oplus$ and $\otimes$ are both commutative, but this is not listed here as it
does not fit into any of the patterns displayed here.}
\label{anothertable}
\begin{tabular}{|c|c|c|}
\hline
\textbf{Successor-based} & \textbf{$\oplus$-based} & \textbf{$\otimes$-based} \\
\hline
\scriptsize{$\alpha+(\beta+\gamma)=(\alpha+\beta)+\gamma$} &
$\alpha\oplus(\beta\oplus\gamma)=(\alpha\oplus\beta)\oplus\gamma$ &
Not applicable \\
\hline
$\alpha(\beta+\gamma)=\alpha\beta+\alpha\gamma$ &
\footnotesize{$\alpha\times(\beta\oplus\gamma)=
	(\alpha\times\beta)\oplus(\alpha\times\gamma)$} &
\footnotesize{$\alpha\otimes(\beta\oplus\gamma)=
	(\alpha\otimes\beta)\oplus(\alpha\otimes\gamma)$} \\
\hline
$\alpha(\beta\gamma)=(\alpha\beta)\gamma$ &
$\alpha\times(\beta\times\gamma)=(\alpha\times\beta)\times\gamma$ &
$\alpha\otimes(\beta\otimes\gamma)=(\alpha\otimes\beta)\otimes\gamma$ \\
\hline
$\alpha^{\beta+\gamma}=\alpha^\beta \alpha^\gamma$ &
$\alpha^{\times(\beta+\gamma)} =
	(\alpha^{\times\beta}) \times (\alpha^{\times\gamma})$ &
$\alpha^{\otimes(\beta\oplus\gamma)} =
	(\alpha^{\otimes\beta}) \otimes (\alpha^{\otimes\gamma})$ \\
\hline
$\alpha^{\beta\gamma}=(\alpha^\beta)^\gamma$ &
$\alpha^{\times(\beta\gamma)}=(\alpha^{\times\beta})^{\times\gamma}$ &
$\alpha^{\otimes(\beta\times\gamma)}=(\alpha^{\otimes\beta})^{\otimes\gamma}$ \\
\hline
\end{tabular}
\end{table}

\begin{table}[htb]
\caption{The infinitary analogue of Table~\ref{anothertable}, organized the same
way.  The associativity laws are stated in an abbreviated form here for
simplicity. The four rows here correspond to the first four rows of
Table~\ref{anothertable}; the fifth row has no extension to the infinitary
setting assuming we use only addition, multiplication, and exponentiation.}
\label{tableinf}
\begin{tabular}{|c|c|c|}
\hline
\textbf{Successor-based} & \textbf{$\oplus$-based} & \textbf{$\otimes$-based} \\
\hline
$\sum_i \sum_j \alpha_{i,j} = \sum_{(j,i)} \alpha_{i,j}$ &
Analogue is false &
Not applicable \\
\hline
$\alpha\sum_i \beta_i = \sum_i \alpha\beta_i$ &
$\alpha\times\bigoplus_i \beta_i = \bigoplus_i (\alpha\times\beta_i)$ &
Analogue is false \\
\hline
$\prod_i \prod_j \alpha_{i,j} = \prod_{(j,i)} \alpha_{i,j}$ &
$\bigtimes_i \bigtimes_j \alpha_{i,j} = \bigtimes_{(j,i)} \alpha_{i,j}$ &
Analogue is false \\
\hline
$\alpha^{\sum_i \beta_i} = \prod_i \alpha^{\beta_i}$ &
$\alpha^{\times(\sum_i \beta_i)} = \bigtimes_i \alpha^{\times\beta_i}$ &
$\alpha^{\otimes(\bigoplus_i \beta_i)}=\bigotimes_i \alpha^{\otimes\beta_i}$ \\
\hline
\end{tabular}
\end{table}

The new results of this paper, then, consist of the laws regarding
super-Jacobsthal exponentiation shown in the tables, and the non-existence of
natural exponentiation.

\section{Operations over the ordinals}
\label{oldsec}

Natural addition and natural multiplication have several equivalent
definitions; the simplest definition is in terms of Cantor normal form.  Recall
that each ordinal number $\alpha$ can be written uniquely as
$\omega^{\alpha_0}a_0 + \ldots + \omega^{\alpha_r}a_r$, where
$\alpha_0>\ldots>\alpha_r$ are ordinals and the $a_i$ are positive integers
(note that $r$ may be $0$); this is known as its Cantor normal form.  (We will
also sometimes, when it is helpful, write $\alpha=\omega^{\alpha_0}a_0 + \ldots
+ \omega^{\alpha_r} a_r + a$ where $a$ is a whole number and $\alpha_r>0$ --
that is to say, we will sometimes consider the finite part of $\alpha$
separately from the rest of the Cantor normal form.)  Then natural addition and
multiplication can roughly be described as adding and multiplying Cantor normal
forms as if these were ``polynomials in $\omega$''.  More formally:
\begin{defn}
We define the \emph{natural sum} of two ordinals $\alpha$ and $\beta$, here
denoted $\alpha \oplus \beta$, as follows.  Take ordinals
$\gamma_0 > \ldots > \gamma_r$ and whole numbers $a_0, \ldots, a_r$ and $b_0,
\ldots, b_r$ so that we may write $\alpha = \omega^{\gamma_0}a_0 + \ldots +
\omega^{\gamma_r}a_r$ and $\beta = \omega^{\gamma_0}b_0 + \ldots +
\omega^{\gamma_r}b_r$.  Then
\[ \alpha\oplus\beta = \omega^{\gamma_0}(a_0+b_0) + \ldots + 
	\omega^{\gamma_r}(a_r+b_r). \]
\end{defn}
\begin{defn}
The \emph{natural product} of $\alpha$ and $\beta$, here denoted $\alpha \otimes
\beta$, is defined as follows.
Write $\alpha = \omega^{\alpha_0} a_0 + \ldots +
\omega^{\alpha_r} a_r$ and  write $\beta = \omega^{\beta_0}b_0 + \ldots +
\omega^{\beta_s}b_s$ with $\alpha_0 > \ldots > \alpha_r$ and $\beta_0 > \ldots >
\beta_s$ ordinals and the $a_i$ and $b_i$ positive integers.  Then
\[ \alpha\otimes\beta = \bigoplus_{\substack{0 \le i \le r \\ 0\le j \le s}}
	\omega^{\alpha_i \oplus \beta_j} a_i b_j. \]
\end{defn}

The natural operations also have recursive definitions, due to Conway \cite[pp.
3--14]{ONAG}.  Let us use the following notation:
\begin{notn}
If $T$ is a set of ordinals, $\sup\nolimits' T$ will denote the smallest ordinal
greater than all elements of $T$.  (This is equal to $\sup\{S\alpha:\alpha\in
T\}$; it is also equal to $\sup T$ unless $T$ has a greatest element, in which
case it is $S(\sup T)$.)
\end{notn}
Then these operations may be characterized by:
\begin{thm}[Conway]
We have:
\begin{enumerate}
\item For ordinals $\alpha$ and $\beta$,
\[ \alpha\oplus\beta = \sup\nolimits'(\{\alpha\oplus\beta':\beta'<\beta\}\cup
\{\alpha'\oplus\beta:\alpha'<\alpha\}). \]
\item For ordinals $\alpha$ and $\beta$,
\[ \alpha\otimes\beta = \min\{x:x\oplus(\alpha'\otimes\beta')>
(\alpha\otimes\beta')\oplus(\alpha'\otimes\beta)\mbox{ for all $\alpha'<\alpha$
and $\beta'<\beta$}\}. \]
\end{enumerate}
\end{thm}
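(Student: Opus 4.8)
The plan is to derive everything from the Cantor-normal-form definitions, using as the central tool a ``polynomial'' model of the natural operations. First I would record the model: sending an ordinal $\sum_i \omega^{\gamma_i}a_i$ to the finitely supported coefficient function $\gamma_i\mapsto a_i$ identifies the ordinals with the nonnegative-coefficient elements of $\Z[M]$, where $M$ is the commutative monoid $(\mathrm{Ord},\oplus)$ written multiplicatively (so $\omega^e\cdot\omega^f=\omega^{e\oplus f}$); under this identification $\oplus$ is coefficientwise addition and $\otimes$ is the induced convolution product, exactly as in the two definitions above, and the ordinal ordering becomes the lexicographic order comparing coefficients at the highest exponent where two elements differ. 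Because $\oplus$ is strictly monotone in each argument (immediate from the definition, by inspecting the top exponent at which the summands differ), $M$ is cancellative and satisfies $g\oplus h\le e\oplus f$ whenever $g\le e$ and $h\le f$, with equality only when $g=e$ and $h=f$. Consequently $\Z[M]$ is an ordered integral domain whose positive cone is the polynomials with positive leading (highest-exponent) coefficient, the leading term of a product is the product of the leading terms, and for genuine ordinals $U,V$ one has $U<V$ iff the difference $U-V$, computed in $\Z[M]$, has negative leading coefficient. I will write $\preceq$ for this order and record for free the ring laws of $\oplus$ and $\otimes$ (commutativity, associativity, distributivity, strict monotonicity), the last of which makes $\oplus$ cancellative, a fact I use repeatedly.

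For part (1) I would prove the two inequalities separately. Writing $S$ for the set on the right, strict monotonicity of $\oplus$ shows every element of $S$ is below $\alpha\oplus\beta$, so $\sup\nolimits' S\le\alpha\oplus\beta$. For the reverse inequality it suffices that every $\eta<\alpha\oplus\beta$ be $\le$ some element of $S$, and I would split on whether $\alpha\oplus\beta$ is a successor or a limit. If it is a successor then at least one of $\alpha,\beta$ is, say $\beta=\beta_0+1$, whence $\alpha\oplus\beta=(\alpha\oplus\beta_0)+1$ with $\alpha\oplus\beta_0\in S$, already forcing $\sup\nolimits' S=\alpha\oplus\beta$. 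If it is a limit then both $\alpha$ and $\beta$ are limits; writing $\gamma_k>0$ for the least exponent occurring in $\alpha\oplus\beta$, with (say) $\alpha$ having positive coefficient there, I would exhibit for each $\xi<\omega^{\gamma_k}$ an $\alpha'<\alpha$ with $\alpha'\oplus\beta=P+\xi$, where $P$ is $\alpha\oplus\beta$ with that coefficient decreased by one; since $\gamma_k>0$ this gives $\sup_{\alpha'<\alpha}(\alpha'\oplus\beta)=P+\omega^{\gamma_k}=\alpha\oplus\beta$, hence $\sup\nolimits' S=\alpha\oplus\beta$.

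For part (2) the upper bound -- that $x=\alpha\otimes\beta$ satisfies all the defining inequalities -- is the clean part: the inequality $(\alpha\otimes\beta)\oplus(\alpha'\otimes\beta')>(\alpha\otimes\beta')\oplus(\alpha'\otimes\beta)$ is, after collecting all four terms in $\Z[M]$, exactly the assertion that $(\alpha-\alpha')\otimes(\beta-\beta')$ is positive, and this holds because for $\alpha'<\alpha$ and $\beta'<\beta$ each factor has positive leading coefficient, so the product does too. For minimality I would first note, using strict monotonicity of $\oplus$, that for each pair $(\alpha',\beta')$ the set of $x$ meeting that pair's constraint is an up-set $[g(\alpha',\beta'),\infty)$, so Conway's minimum equals $\sup_{\alpha',\beta'}g(\alpha',\beta')$; the upper bound gives $g(\alpha',\beta')\le\alpha\otimes\beta$, and it remains to show the supremum is attained at $\alpha\otimes\beta$, i.e. that every $\eta<\alpha\otimes\beta$ fails some pair. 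Translated into $\Z[M]$, ``$\eta$ fails $(\alpha',\beta')$'' becomes the requirement $(\alpha-\alpha')\otimes(\beta-\beta')\preceq(\alpha\otimes\beta)-\eta$.

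The main obstacle is exactly this last step. Let $\mu$ be the top exponent at which $\eta$ and $\alpha\otimes\beta$ differ and let $m^{*}=\alpha_r\oplus\beta_s$ be the least exponent occurring in $\alpha\otimes\beta$. When $\mu>m^{*}$ I would simply reduce $\alpha$ and $\beta$ at their lowest Cantor exponents, making $(\alpha-\alpha')\otimes(\beta-\beta')$ have leading exponent $m^{*}<\mu$, which settles the case; the same minimal reduction, giving product $1$ when $\mu=m^{*}=0$, handles the purely finite case with an exact threshold. The hard subcase is $\mu=m^{*}>0$ with $\eta$ carrying nonzero ``junk'' at exponents below $m^{*}$: the minimal reduction overshoots (its product matches $\alpha\otimes\beta$ at $\mu$ but contributes nothing lower, leaving the junk undominated), and $(\alpha-\alpha')\otimes(\beta-\beta')$ is typically not even a genuine ordinal. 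The fix, which I expect to require the most care, is to enlarge the reductions: shrinking $\beta$ far below itself (respectively $\alpha$) creates a low-exponent term in $\beta-\beta'$ whose product with $\omega^{\alpha_r}$ lands below $m^{*}$ with a coefficient I can make as large as needed, and letting these reductions approach $\alpha$ and $\beta$ from below drives the thresholds $g(\alpha',\beta')$ up to $\alpha\otimes\beta$. Making this domination precise for arbitrary Cantor normal forms -- checking that the lower-order terms of $(\alpha-\alpha')\otimes(\beta-\beta')$ can always be arranged to sit below $(\alpha\otimes\beta)-\eta$ in $\preceq$ -- is the crux, and I would carry it out by a cofinality argument on the least exponents of $\alpha$ and $\beta$.
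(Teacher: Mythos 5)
The paper never proves this theorem: it is attributed to Conway and cited to \cite[pp. 3--14]{ONAG}, so there is no internal proof to compare against. The closest the paper comes is its proof of Lemma~\ref{monoton}, which invokes the embedding of the ordinals-with-natural-operations into the surreal numbers in exactly the way your ordered ring $\Z[M]$ does; your framework is a self-contained substitute for that embedding, and it is legitimate (with $\oplus$ strictly monotone, $M$ is a linearly ordered cancellative monoid, so $\Z[M]$ under the lexicographic order is an ordered domain). Judged on its own, your plan is essentially right. Part (1) is correct as written, with one quibble: when $\alpha\oplus\beta$ is a limit, $\alpha$ or $\beta$ may be $0$ rather than a limit -- a trivial extra case. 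In part (2), the translation of each constraint into positivity of $(\alpha-\alpha')\otimes(\beta-\beta')$, the reduction of Conway's minimum to $\sup_{\alpha',\beta'}g(\alpha',\beta')$ via up-sets, and the case split on $\mu>m^*$ versus $\mu=m^*$ are all sound, and you have correctly isolated the crux ($\mu=m^*>0$ with the coefficient deficit $d=1$ and nonzero junk $\xi$ below $m^*$).

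The one place your sketch would fail as literally written is the mechanism for that crux. You propose a low-exponent term in $\beta-\beta'$ ``whose product with $\omega^{\alpha_r}$ lands below $m^*$ with a coefficient I can make as large as needed.'' Coefficient-largeness is not the right invariant; you need exponent-cofinality, and one-sided shrinking provably fails to supply it. Take $\alpha=\omega$, $\beta=\omega^{\omega}$, so $\alpha_r=1$, $\beta_s=\omega$, $m^*=1\oplus\omega=\omega+1$, $\alpha\otimes\beta=\omega^{\omega+1}$, and take $\eta=\omega^{\omega}3$. With the minimal reduction on the $\alpha$ side ($\alpha-\alpha'=\omega^{\alpha_r}=\omega$), every $\zeta<\omega^{\beta_s}$ has finite degree, so $\omega\otimes\zeta$ has degree $(\deg\zeta)\oplus 1<\omega$ and can never dominate $\omega^{\omega}3$; indeed $\sup_{e<\beta_s}(\alpha_r\oplus e)=\omega<\omega+1=m^*$, a direct symptom of the discontinuity of $\oplus$. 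Here you must shrink $\alpha$ instead: $\alpha'=a$ finite and $\beta'=0$ give $(\omega-a)\otimes\omega^{\omega}=\omega^{\omega+1}-\omega^{\omega}a$, and $a\ge3$ succeeds. The correct general statement is that the \emph{union} of the two one-sided families $\{\omega^{\alpha_r}\otimes\zeta:\zeta<\omega^{\beta_s}\}\cup\{\omega^{\beta_s}\otimes\zeta':\zeta'<\omega^{\alpha_r}\}$ is cofinal in $\omega^{\alpha_r\oplus\beta_s}$, and this is exactly part (1) applied to the exponents $\alpha_r,\beta_s$ -- a pleasant recursion of part (1) into part (2): if $m^*$ is a successor, part (1) forces some exponent $\alpha_r\oplus e$ or $e'\oplus\beta_s$ to equal $m^*-1$ and then large coefficients finish; if $m^*$ is a limit, the reached exponents are themselves cofinal in it. Taking the entire reduction in one factor while removing only a single copy of the least term from the other keeps the product in the clean two-term form $\omega^{m^*}-T$ with $T\ge\xi$, avoiding sign bookkeeping for cross terms. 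Your parenthetical ``(respectively $\alpha$)'' gestures at this symmetrization, but the two-family cofinality is the actual content of the crux; with it, your argument closes.
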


As was mentioned earlier, the natural operations also have order-theoretic
interpretations \cite{carruth,wpo}.

The natural operations have some better algebraic properties than the ordinary
operations -- they are commutative, and have appropriate cancellation
properties; as mentioned earlier, these are the operations with which the
ordinals embed in the field of surreal numbers.  We list out explicitly the
algebraic laws analogous to those satisfied by the ordinary operations:

\begin{lem}
The natural operations satisfy:
\begin{enumerate}
\item Associativity of addition:
$\alpha\oplus(\beta\oplus\gamma)=(\alpha\oplus\beta)\oplus\gamma$.
\item Distributivity of multiplication over addition:
$\alpha\otimes(\beta\oplus\gamma)=
	(\alpha\otimes\beta)\oplus(\alpha\otimes\gamma)$.
\item Associativity of multiplication:
$\alpha\otimes(\beta\otimes\gamma)=(\alpha\otimes\beta)\otimes\gamma$.
\end{enumerate}
\end{lem}
As these operations are commutative, $\otimes$ in fact distributes over $\oplus$
on both sides, but this will not be relevant.

The natural operations do not behave as well as the ordinary operations with
regard to continuity; not being defined by transfinite iteration, these
operations are not continuous in either operand, whereas the ordinary operations
are continuous in the right operand.

As was mentioned earlier, there is no natural exponentiation, and we will prove
this in Section~\ref{noexpsec}.

\subsection{Infinitary ordinary and natural operations}
\label{infsec}

One can, by taking limits, define infinitary versions of these operations as
well.  For instance, for the natural sum, one may define:
\begin{defn}
\label{infoplus}
Given an indexed family of ordinals $\alpha_i$ indexed by the ordinals $i<\beta$
for some ordinal $\beta$, one can define the infinitary natural sum
$\bigoplus_{i<\beta} \alpha_i$:
\begin{enumerate}
\item If $\beta=0$, then $\bigoplus_{i<\beta} \alpha_i=0$.
\item If $\beta=S\gamma$, then
\[ \bigoplus_{i<\beta} \alpha_i =
\left(\bigoplus_{i<\gamma} \alpha_i\right) \oplus \alpha_{\gamma}. \]
\item If $\beta$ is a limit ordinal, then
\[ \bigoplus_{i<\beta} \alpha_i =
\lim_{\gamma<\beta} \bigoplus_{\alpha<\gamma} \alpha_i. \]
\end{enumerate}
\end{defn}
The definition for infinitary natural product is analogous; we will not write it
out explicitly.

Some care is warranted with the infinitary operations, though.  For instance, as
the natural operations are not continuous in the right operand,
$1\oplus(1\oplus1\oplus\ldots)$ is not equal to $1\oplus1\oplus\ldots$ (as
$\omega+1\ne\omega)$, and neither is $2\otimes(2\otimes2\otimes\ldots)$ equal to
$2\otimes2\otimes\ldots$ (as $\omega2\ne\omega)$.  Neither does natural
multiplication distribute over infinitary natural addition; for instance,
$2\otimes(1\oplus1\oplus\ldots)$ is not equal to $2\oplus2\oplus\ldots$, as,
again, $\omega2\ne\omega$.

This is in contrast to the ordinary operations, whose infinitary versions do
satisfy laws extending those in the finitary case.  One has:
\begin{enumerate}
\item Generalized associativity of addition:
\[ \sum_{i<\gamma} \sum_{j<\beta_i} \alpha_{i,j} =
\sum_{(j,i)\in\sum_{k<\gamma} \beta_k} \alpha_{i,j}. \]
\item Left-distributivity of multiplication over addition:
\[ \alpha\sum_{i<\gamma}\beta_i = \sum_{i<\gamma}\alpha\beta_i. \]
\item Generalized associativity of multiplication:
\[ \prod_{i<\gamma} \prod_{j<\beta_i} \alpha_{i,j} =
\prod_{(j,i)\in\sum_{k<\gamma} \beta_k} \alpha_{i,j}. \]
\item Exponentiation converts addition to multiplication:
\[ \alpha^{\sum_{i<\gamma} \beta_i} = \prod_{i<\gamma} \alpha^{\beta_i}. \]
\end{enumerate}
Here, $\sum_{k<\gamma} \beta_k$ is the ordinary sum of the $\beta_k$, which is
considered as a disjoint (tagged) union of the $\beta_k$; each element is an
ordered pair $(j,i)$ for some $i<\gamma$ and some $j<\beta_i$, and they are
ordered lexicographically, first by $i$ and then by $j$.  This same convention
will be used later as well.

It should also be pointed out that while the ordinary operations have a
well-known order-theoretic meaning even when infinitary, the same cannot be said
of the natural operations, whose order-theoretic definitions are not so easy to
extend to the infinitary case.  An order-theoretic characterization of the
infinitary natural sum was recently discovered by P.~Lipparini
\cite{lipparini1,lipparini2}, but none remains known for the infinitary natural
product.

\subsection{Jacobsthal's operations}

In 1909, E.~Jacobsthal introduced \cite{jacobsthal} the operation $\times$,
which we refer to as ``Jacobsthal multiplication'', defined by transfinitely
iterating natural addition; $\alpha\times\beta$ means $\alpha$ added to itself
$\beta$-many times, using natural addition.  More formally:

\begin{defn}[Jacobsthal]
We define the operation $\times$ by
\begin{enumerate}
\item For any $\alpha$, $\alpha\times0:=0$.
\item For any $\alpha$ and $\beta$,
$\alpha\times(S\beta):=(\alpha\times\beta)\oplus\alpha$.
\item If $\beta$ is a limit ordinal,
$\alpha\times\beta:=\lim_{\gamma<\beta} (\alpha\times\gamma)$.
\end{enumerate}
\end{defn}

As noted earlier, this can be equivalently described as
\[ \alpha\times\beta = \bigoplus_{i<\beta} \alpha.\]

This multiplication is not commutative; for instance,
$2\times\omega=\omega\ne\omega2=\omega\times2$.  We will discuss other algebraic
laws for it shortly.

Jacobsthal multiplication can be regarded as intermediate between ordinary and
natural multiplication; like natural multiplication, it is related to natural
addition, but like ordinary multiplication, it is based on transfinite
iteration.  See also Section~\ref{compar}.

Jacobsthal then went on to describe a notion of exponentiation obtained by
transfinitely iterating $\times$, which we refer to as ``Jacobsthal
exponentiation''.  More formally:

\begin{defn}[Jacobsthal]
We define $\alpha^{\times\beta}$ by
\begin{enumerate}
\item For any $\alpha$, $\alpha^{\times0}:=1$.
\item For any $\alpha$ and $\beta$, $\alpha^{\times (S\beta)}:=
(\alpha^{\times\beta})\times\alpha$.
\item If $\beta$ is a limit ordinal,
$\alpha^{\times\beta}:=\lim_{\gamma<\beta} (\alpha^{\times\gamma})$.
\end{enumerate}
\end{defn}

Note that we can define infinitary Jacobsthal multiplication as well, analogous
again to Definition~\ref{infoplus} for the infinitary natural sum; we will not
write this out explicitly.  With this definition, one then has, as noted
earlier, \[ \alpha^{\times\beta} = \bigtimes_{i<\beta} \alpha. \]

Jacobsthal then proved \cite{jacobsthal} the algebraic law:

\begin{thm}[Jacobsthal]
\label{jacthm}
For any ordinals $\alpha$, $\beta$, and $\gamma$, one has
\[ \alpha\times(\beta\oplus\gamma) = 
(\alpha\times\beta) \oplus (\alpha\times\gamma).\]
That is to say, $\times$ left-distributes over $\oplus$.
\end{thm}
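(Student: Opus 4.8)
The plan is to prove the identity by transfinite induction on the single ordinal $\sigma = \beta \oplus \gamma$, establishing simultaneously, for every $\alpha$ and every decomposition $\sigma = \beta \oplus \gamma$, that $\alpha \times (\beta\oplus\gamma) = (\alpha\times\beta)\oplus(\alpha\times\gamma)$. The case $\alpha = 0$ is trivial since then both sides vanish, so I assume $\alpha \ge 1$; I also note that $\beta$ and $\gamma$ play symmetric roles, as $\oplus$ is commutative, so in any case analysis I may freely interchange them. Before starting I would record two monotonicity facts that the argument leans on: that $\oplus$ is strictly increasing in each argument (immediate from the Cantor normal form definition, or from Conway's recursion), and that $\delta \mapsto \alpha\times\delta$ is strictly increasing for $\alpha\ge1$ and sends limit ordinals to limit ordinals (the successor clause adds $\alpha > 0$ via $\oplus$, and the limit clause takes suprema of strictly increasing sequences).

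The base case $\sigma = 0$ forces $\beta = \gamma = 0$ and is immediate. For the successor case $\sigma = S\tau$, the finite part of $\sigma$ is nonzero, so at least one of $\beta,\gamma$ has nonzero finite part and is therefore a successor; by symmetry I assume $\gamma = S\delta$. The key elementary observation is that $\beta \oplus S\delta = S(\beta\oplus\delta)$, because adding $1 = \omega^0$ only increments the finite part; hence $\beta\oplus\delta = \tau < \sigma$. I then compute, using the successor clause of $\times$, the inductive hypothesis applied to the pair $(\beta,\delta)$, and associativity of $\oplus$:
\[ \alpha\times\sigma = \bigl(\alpha\times(\beta\oplus\delta)\bigr)\oplus\alpha = \bigl((\alpha\times\beta)\oplus(\alpha\times\delta)\bigr)\oplus\alpha = (\alpha\times\beta)\oplus(\alpha\times S\delta), \]
which is the desired equality.

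The genuinely delicate case, and the one I expect to be the main obstacle, is $\sigma$ a limit ordinal; here the finite parts of $\beta$ and $\gamma$ both vanish, so each is either $0$ (in which case the statement is trivial) or a limit, and I assume both are limits. The difficulty is that $\oplus$ is \emph{not} continuous in its arguments, so I cannot simply pass the supremum defining $\alpha\times\sigma$ through the natural sum. The resolution is to route everything through Conway's recursive characterization of $\oplus$. Since $\sigma$ is a limit, Conway's formula gives $\sigma = \sup\bigl(\{\beta'\oplus\gamma : \beta'<\beta\}\cup\{\beta\oplus\gamma':\gamma'<\gamma\}\bigr)$, so this mixed family is cofinal in $\sigma$; as $\alpha\times\cdot$ is monotone, the limit clause for $\times$ yields
\[ \alpha\times\sigma = \max\Bigl(\sup_{\beta'<\beta}\alpha\times(\beta'\oplus\gamma),\ \sup_{\gamma'<\gamma}\alpha\times(\beta\oplus\gamma')\Bigr). \]
Both $\beta'\oplus\gamma$ and $\beta\oplus\gamma'$ are natural sums strictly below $\sigma$, so the inductive hypothesis rewrites the two inner terms as $(\alpha\times\beta')\oplus(\alpha\times\gamma)$ and $(\alpha\times\beta)\oplus(\alpha\times\gamma')$. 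Writing $A = \alpha\times\beta$ and $B=\alpha\times\gamma$, and using that $\{\alpha\times\beta':\beta'<\beta\}$ and $\{\alpha\times\gamma':\gamma'<\gamma\}$ are cofinal in the limit ordinals $A$ and $B$ respectively, together with monotonicity of $\oplus$, the right-hand side becomes $\sup\bigl(\{A''\oplus B : A''<A\}\cup\{A\oplus B'':B''<B\}\bigr)$. Finally I invoke Conway's recursion for $\oplus$ once more: since $A$ and $B$ are limits, $A\oplus B$ is a limit and equals the $\sup'$, hence the $\sup$, of exactly this set. Therefore $\alpha\times\sigma = A\oplus B = (\alpha\times\beta)\oplus(\alpha\times\gamma)$, completing the induction. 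The whole argument hinges on the fact that Conway's $\sup'$-recursion for $\oplus$ describes the very same mixed family $\{\beta'\oplus\gamma\}\cup\{\beta\oplus\gamma'\}$ that the inductive hypothesis can simplify, which is what lets the two uses of the recursion — one to decompose $\sigma$, one to reassemble $A\oplus B$ — fit together.
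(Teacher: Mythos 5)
Your proof is correct and takes essentially the same approach as the paper's inductive proof: the same successor-case computation via $\beta\oplus S\delta = S(\beta\oplus\delta)$, and the same limit-case argument showing the mixed family $\{\beta'\oplus\gamma : \beta'<\beta\}\cup\{\beta\oplus\gamma' : \gamma'<\gamma\}$ (simplified by the inductive hypothesis) is cofinal with the set whose supremum gives $(\alpha\times\beta)\oplus(\alpha\times\gamma)$, using that $\alpha\times\beta$, $\alpha\times\gamma$, and their natural sum are limit ordinals. The only cosmetic difference is that you induct on the single ordinal $\beta\oplus\gamma$, quantifying over all decompositions, where the paper runs a double induction on $(\beta,\gamma)$; the inductive-hypothesis instances invoked are identical, since $\oplus$ is strictly increasing in each argument.
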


This distributivity works only on the left and not on the right; for instance,
\[ (1\oplus1)\times\omega = \omega \ne
\omega2 = (1\times\omega) \oplus (1\times\omega).\]
Jacobsthal gave only a computational proof of Theorem~\ref{jacthm}, by computing
the Cantor normal form of both sides and observing their equality.  More
specifically, he proved:

\begin{thm}[Jacobsthal]
\label{jaccomput}
Let $\alpha$ and $\beta$ be ordinals.  Write $\alpha$ in Cantor normal form as
\[ \alpha=\omega^{\alpha_0}a_0 + \ldots + \omega^{\alpha_r}a_r;\]
here $\alpha_0,\ldots,\alpha_r$ is a decreasing (possibly empty) sequence of
ordinals and the $a_i$ are positive integers.
Write $\beta$ in Cantor normal form as
\[ \beta=\omega^{\beta_0}b_0 + \ldots + \omega^{\beta_s}b_s + b;\]
here $\beta_0,\ldots,\beta_s$ is a decreasing (possibly empty) sequences of
nonzero ordinals, the $b_i$ are positive integers, and $b$ is a nonnegative
integer.
Then
\[ \alpha\times\beta =
\omega^{\alpha_0+\beta_0}b_0 + \ldots + \omega^{\alpha_0+\beta_s}b_s
+ \omega^{\alpha_0}(a_0 b) + \ldots + \omega^{\alpha_r} (a_r b).\]

In other words, if $\beta=\beta'+b$ where $\beta'$ is either $0$ or a limit
ordinal and $b$ is finite, then
\[ \alpha\times\beta = \omega^{\alpha_0} \beta' + \alpha\times b.\]
\end{thm}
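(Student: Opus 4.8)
The plan is to prove the ``in other words'' formulation directly by transfinite induction on $\beta$, and then recover the Cantor normal form statement from it. I may assume $\alpha>0$, since for $\alpha=0$ both sides vanish. For the reduction, I would first observe that the two formulations are equivalent by elementary facts about ordinary ordinal arithmetic: writing $\beta'=\omega^{\beta_0}b_0+\ldots+\omega^{\beta_s}b_s$ for the limit part, left-distributivity and associativity of ordinary multiplication give $\omega^{\alpha_0}\beta'=\omega^{\alpha_0+\beta_0}b_0+\ldots+\omega^{\alpha_0+\beta_s}b_s$ (using $\omega^{\alpha_0}\omega^{\beta_i}=\omega^{\alpha_0+\beta_i}$), while unwinding the definition of $\times$ on the finite ordinal $b$ gives $\alpha\times b=\omega^{\alpha_0}(a_0 b)+\ldots+\omega^{\alpha_r}(a_r b)$, since $\oplus$ merely adds the coefficients of matching powers of $\omega$.

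So write $\beta=\beta'+b$ with $\beta'$ either $0$ or a limit and $b$ finite; the target becomes $\alpha\times\beta=\omega^{\alpha_0}\beta'+\alpha\times b$. The base case $\beta=0$ is immediate. For the successor case, note that $S\beta=\beta'+(b+1)$ has the same limit part $\beta'$, and compute $\alpha\times(S\beta)=(\alpha\times\beta)\oplus\alpha=(\omega^{\alpha_0}\beta'+\alpha\times b)\oplus\alpha$ using the inductive hypothesis. The crucial observation is that every exponent occurring in $\omega^{\alpha_0}\beta'$ has the form $\alpha_0+\beta_i$ with $\beta_i>0$, hence is strictly greater than $\alpha_0$, whereas every exponent in $\alpha\times b$ and in $\alpha$ is at most $\alpha_0$. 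Thus, writing $H=\omega^{\alpha_0}\beta'$, ordinary and natural addition agree on these ``separated'' summands, so $H+(\alpha\times b)=H\oplus(\alpha\times b)$; then by associativity and commutativity of $\oplus$ the natural sum with $\alpha$ leaves $H$ untouched, giving $(H\oplus(\alpha\times b))\oplus\alpha=H+((\alpha\times b)\oplus\alpha)=\omega^{\alpha_0}\beta'+\alpha\times(b+1)$, as required.

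For the limit case, $\beta$ is a limit, so $b=0$ and $\beta'=\beta$, and I must show $\lim_{\gamma<\beta}(\alpha\times\gamma)=\omega^{\alpha_0}\beta$, which I would do via two inequalities. For the upper bound, writing $\gamma=\gamma'+c<\beta$, the inductive hypothesis together with $\alpha\times c<\omega^{\alpha_0}(a_0 c+1)$ gives $\alpha\times\gamma<\omega^{\alpha_0}(\gamma'+a_0 c+1)$; since $\gamma'<\beta$ and $\beta$ is a limit, $\gamma'+a_0 c+1<\beta$, so strict monotonicity of ordinary left-multiplication yields $\alpha\times\gamma<\omega^{\alpha_0}\beta$. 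For the matching lower bound, the same inductive description gives $\alpha\times\gamma\ge\omega^{\alpha_0}\gamma$ for every $\gamma$, so by continuity of ordinary left-multiplication the values are cofinal in $\omega^{\alpha_0}\beta=\sup_{\gamma<\beta}\omega^{\alpha_0}\gamma$. Together these force the limit to equal $\omega^{\alpha_0}\beta$.

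I expect the successor step to be the main obstacle -- not because it is long, but because it is where the subtle interaction between ordinary addition (which structures the Cantor normal form) and natural addition (which drives the iteration defining $\times$) must be handled carefully; the whole point is that forming the natural sum with $\alpha$ disturbs only the low-exponent tail and slides harmlessly past the fixed head $\omega^{\alpha_0}\beta'$. Everything else reduces to standard monotonicity and continuity properties of ordinary ordinal multiplication together with the commutativity and associativity of $\oplus$.
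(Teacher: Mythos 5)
Your proof is correct, but there is nothing in the paper to compare it against: the paper states Theorem~\ref{jaccomput} as Jacobsthal's result, citing his 1909 paper and describing his argument only as ``computation'' showing the two sides have the same Cantor normal form; no proof is reproduced, and the theorem is invoked merely to explain why Jacobsthal's original derivation of Theorem~\ref{jacthm} is unsatisfying. So your argument stands on its own, and it checks out at each point. The equivalence of the two formulations is right: $\omega^{\alpha_0}\beta'=\omega^{\alpha_0+\beta_0}b_0+\ldots+\omega^{\alpha_0+\beta_s}b_s$ by left-distributivity, $\alpha\times b=\omega^{\alpha_0}(a_0b)+\ldots+\omega^{\alpha_r}(a_rb)$ since $\oplus$ adds matching coefficients, and the concatenation is a valid Cantor normal form because $\alpha_0+\beta_s>\alpha_0$. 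Your successor step correctly isolates the only delicate point: with $H=\omega^{\alpha_0}\beta'$, every exponent of $H$ exceeds $\alpha_0$ while those of $\alpha\times b$ and $\alpha$ are at most $\alpha_0$, so ordinary and natural addition agree on the separated pieces and the natural sum with $\alpha$ disturbs only the tail, giving $H+\bigl((\alpha\times b)\oplus\alpha\bigr)=\omega^{\alpha_0}\beta'+\alpha\times(b+1)$. The limit case is also sound: $\alpha\times c<\omega^{\alpha_0}(a_0c+1)$ together with $\gamma'+a_0c+1<\beta$ (valid since $\beta$ is a limit) gives the strict upper bound $\omega^{\alpha_0}\beta$, and $\alpha\times\gamma\ge\omega^{\alpha_0}\gamma$ with continuity of $\delta\mapsto\omega^{\alpha_0}\delta$ gives cofinality from below. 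It is worth noting that your proof is exactly in the spirit the author advocates in Section~\ref{disc}: a direct transfinite induction rather than Jacobsthal's computational verification, structurally parallel to the paper's inductive proofs of Theorems~\ref{jacthm} and~\ref{mainthm} (successor step via algebra of $\oplus$, limit step via a cofinality argument), so it could serve as the self-contained proof the paper deliberately omits.
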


With this in hand, Theorem~\ref{jacthm} is straightforward, but as an
explanation, it is not very satisfying.  Here, we improve upon Jacobsthal's
proof by presenting an inductive proof:

\begin{proof}[Inductive proof of Theorem~\ref{jacthm}]
We induct on $\beta$ and $\gamma$.  If $\beta=0$ or $\gamma=0$, the statement is
obvious.  If $\gamma$ is a successor, say $\gamma=S\gamma'$, then we have
\begin{multline*}
\alpha\times(\beta\oplus\gamma) = 
\alpha\times(\beta\oplus S\gamma') =
\alpha\times S(\beta\oplus\gamma') =
(\alpha\times(\beta\oplus\gamma'))\oplus\alpha = \\
(\alpha\times\beta)\oplus(\alpha\times\gamma')\oplus\alpha =
(\alpha\times\beta)\oplus(\alpha\times\gamma),
\end{multline*}
as needed.  If $\beta$ is a successor, the proof is similar.

This leaves the case where $\beta$ and $\gamma$ are both limit ordinals.  Note
that in this case, $\beta\oplus\gamma$ is a limit ordinal as well, and that
\[ \beta\oplus\gamma = \sup(\{\beta\oplus\gamma':\gamma'<\gamma\}\cup
\{\beta'\oplus\gamma:\beta'<\beta\}). \]
So
\begin{multline}
\label{eqn21}
\alpha\times(\beta\oplus\gamma) =
\sup\{\alpha\times\delta : \delta<\beta\oplus\gamma \} = \\
\sup(\{\alpha\times(\beta'\oplus\gamma):\beta'<\beta\}\cup
\{\alpha\times(\beta\oplus\gamma'):\gamma'<\gamma\}) = \\
\sup(\{(\alpha\times\beta')\oplus(\alpha\times\gamma):\beta'<\beta\}\cup
\{(\alpha\times\beta)\oplus(\alpha\times\gamma'):\gamma'<\gamma\}).
\end{multline}

Since $\alpha\times\beta$, $\alpha\times\gamma$, and their natural sum are all
limit ordinals as well, we have
\begin{equation}
\label{eqn22}
(\alpha\times\beta) \oplus (\alpha\times\gamma) =
\sup(\{\delta\oplus(\alpha\times\gamma):\delta<\alpha\times\beta\}\cup
\{(\alpha\times\beta)\oplus\varepsilon:\varepsilon<\alpha\times\beta\}).
\end{equation}

So we want to show that these two sets we are taking the suprema of (in the
final expressions in Equations~\eqref{eqn21} and \eqref{eqn22}) are
cofinal, and thus have equal suprema.  The first of these is actually a subset
of the second, so it suffices to check that it is cofinal in it.  So if
$\delta<\alpha\times\beta$, then $\delta\le\alpha\times\beta'$ for some
$\beta'<\beta$, so $\delta\oplus(\alpha\times\gamma)\le
(\alpha\times\beta')\oplus(\alpha\times\gamma)$; similarly with
$\varepsilon<\alpha\times\gamma$.

So our two suprema are equal and
$\alpha\times(\beta\oplus\gamma)=(\alpha\times\beta)\oplus(\alpha\times\gamma)$;
this proves the theorem.
\end{proof}

Once one has Theorem~\ref{jacthm} in hand, it is straightforward to prove by
transfinite induction, as Jacobsthal did, that

\begin{thm}[Jacobsthal]
\label{jaccor}
The following algebraic relations hold:
\begin{enumerate}
\item Jacobsthal multiplication is associative: For any $\alpha$, $\beta$, and
$\gamma$, one has
	\[\alpha\times(\beta\times\gamma)=(\alpha\times\beta)\times\gamma.\]
\item Jacobsthal exponentiation converts ordinary addition to Jacobsthal
multiplication: For any $\alpha$, $\beta$, and $\gamma$, one has
	\[\alpha^{\times(\beta+\gamma)}=
	(\alpha^{\times\beta})\times(\alpha^{\times\gamma}).\]
\item The Jacobsthal exponential of an ordinary product is an iterated
Jacobsthal exponentiation: For any $\alpha$, $\beta$, and $\gamma$, one has
	\[\alpha^{\times(\beta\gamma)}=(\alpha^{\times\beta})^{\times\gamma}.\]
\end{enumerate}
\end{thm}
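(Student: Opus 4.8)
The plan is to prove the three statements in order by transfinite induction on $\gamma$ (the right-most variable in each case), keeping $\alpha$ and $\beta$ fixed; each part will feed into the next, and the only substantial input is Jacobsthal's left-distributive law (Theorem~\ref{jacthm}), together with the fact that $\times$ and $\alpha^{\times(-)}$ are, by their very definitions, continuous in their right (resp. exponent) argument.

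For part (1) the base case $\gamma=0$ is immediate, both sides being $0$. In the successor case $\gamma=S\delta$ I would expand $\alpha\times(\beta\times S\delta)=\alpha\times((\beta\times\delta)\oplus\beta)$, apply Theorem~\ref{jacthm} to rewrite this as $(\alpha\times(\beta\times\delta))\oplus(\alpha\times\beta)$, invoke the induction hypothesis on the first summand, and recognize the result as $(\alpha\times\beta)\times S\delta$. The limit case is where the care lies: since $\beta\times(-)$ is, for $\beta\geq 1$, strictly increasing and continuous, the set $\{\beta\times\delta:\delta<\gamma\}$ is cofinal in the limit ordinal $\beta\times\gamma$, so continuity and monotonicity of $\alpha\times(-)$ give $\alpha\times(\beta\times\gamma)=\sup_{\delta<\gamma}\alpha\times(\beta\times\delta)$; the induction hypothesis and continuity of $(\alpha\times\beta)\times(-)$ then identify this with $(\alpha\times\beta)\times\gamma$. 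The degenerate cases $\alpha=0$ or $\beta=0$, where the relevant map is constant rather than strictly increasing, are checked directly.

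Parts (2) and (3) follow the same template. For (2) the successor step rewrites $\alpha^{\times(\beta+S\delta)}=(\alpha^{\times(\beta+\delta)})\times\alpha$, applies the induction hypothesis, and then uses associativity from part (1) to regroup $((\alpha^{\times\beta})\times(\alpha^{\times\delta}))\times\alpha$ as $(\alpha^{\times\beta})\times(\alpha^{\times S\delta})$, while the base case uses $\mu\times 1=\mu$. For (3) the successor step invokes the ordinary identity $\beta(S\delta)=\beta\delta+\beta$ and then part (2) to convert the exponent-sum into a $\times$-product, followed by the induction hypothesis and the definition of $(\alpha^{\times\beta})^{\times(-)}$. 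In both limit cases I would again appeal to continuity of $\alpha^{\times(-)}$ (resp. $(\alpha^{\times\beta})^{\times(-)}$) together with the cofinality of the images of $\beta+\delta$ (resp. $\beta\delta$) in $\beta+\gamma$ (resp. $\beta\gamma$).

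The main obstacle I anticipate is not the algebra of the successor steps, which is routine given Theorem~\ref{jacthm} and the preceding parts, but the limit steps, where I must justify interchanging the outer operation with a supremum. This reduces to a single reusable lemma — that a monotone, continuous ordinal function carries the supremum of a cofinal set to the supremum of its images — plus a verification of monotonicity of the operations, which is what forces me to dispose of the degenerate values $\alpha,\beta\in\{0,1\}$ (where monotonicity or strict growth can fail) as separate easy cases.
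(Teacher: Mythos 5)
Your proposal is correct and is precisely the route the paper intends: the paper leaves Theorem~\ref{jaccor} as a ``straightforward'' transfinite induction from the left-distributivity law of Theorem~\ref{jacthm}, and your argument carries that out, with each part feeding the next and the successor steps using exactly distributivity, then associativity, then part (2). Your attention to the limit steps (continuity plus weak monotonicity to pass suprema over cofinal images, with the degenerate bases $\alpha,\beta\in\{0,1\}$ handled separately) supplies the only details the paper omits, and they are handled correctly.
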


The same methods easily show infinitary versions of these.
\begin{thm}
\label{jaccorinf}
The following algebraic relations hold:
\begin{enumerate}
\item Jacobsthal multiplication distributes over infinitary natural sum:
\[ \alpha\times\bigoplus_{i<\gamma}\beta_i =
\bigoplus_{i<\gamma}(\alpha\times\beta_i). \]
\item Infinitary Jacobsthal multiplication satisfies ``generalized
associativity'':
\[ \bigtimes_{i<\gamma} \bigtimes_{j<\beta_i} \alpha_{i,j} =
\bigtimes_{(j,i)\in\sum_{k<\gamma} \beta_k} \alpha_{i,j} \]
\item Jacobsthal exponentiation converts infinitary addition to Jacobsthal
multiplication:
\[ \alpha^{\times(\sum_{i<\gamma} \beta_i)} =
\bigtimes_{i<\gamma} \alpha^{\times\beta_i}. \]
\end{enumerate}
\end{thm}

\subsection{Jacobsthal's laws: Discussion}
\label{disc}

We have just given an inductive proof of Theorem~\ref{jacthm}.  However, one
obvious question remains: Is there an order-theoretic proof?  We can ask the
same for Theorems~\ref{jaccor} and \ref{jaccorinf} as well.  Of course, to write
an order-theoretic proof of any of these, one would first need an
order-theoretic interpretation of Jacobsthal multiplication.

As mentioned earlier, however, an order-theoretic characterization of the
infinitary natural sum was recently found by P.\ Lipparini
\cite{lipparini1,lipparini2}, which in particular yields an order-theoretic
characterization of Jacobsthal multiplication.  This charaterization does not
make Theorem~\ref{jacthm} or part (1) of Theorem~\ref{jaccor} obvious, so there
is still work to do there, but an answer may be close at hand.  As for parts (2)
and (3), no order-theoretic intepretation has yet been found for Jacobsthal
exponentiation, or for infinitary Jacobsthal multiplication more generally.

There is an additional mystery to part (1) of Theorem~\ref{jaccor}.  While the
proof is a simple transfinite induction using Theorem~\ref{jacthm}, the
statement itself still looks strange; why should the operation of $\times$ be
associative?  Typically, when we prove that an operation $*$ is associative, we
are not just proving that $a*(b*c)=(a*b)*c$; rather, we usually do it by proving
that $a*(b*c)$ and $(a*b)*c$ are both equal to some object $a*b*c$, and that
indeed $a_1*\ldots*a_r$ makes sense for any finite $r$ -- not just proving that
this makes sense \emph{because} $*$ happens to be associative, so that $a*b*c$
is may be written as a notational shortcut; but that $a*b*c$ makes sense as an
object on its own, and that this relation is \emph{why} $*$ must be associative.
The same question applies, perhaps even more so, to part (2) of
Theorem~\ref{jaccorinf}.  (Note that the generalized associativity laws
satsified by ordinary sum and ordinary product have both been stated in this
relation-between-arities form, because this is the simplest way to do so.)

Consider, for instance, multiplication of cardinal numbers; the simplest way to
show associativity of the binary version is to first define it for any number of
operands.  One would define the product $\kappa\lambda\mu$ to be the cardinality
of the Cartesian product $\kappa\times\lambda\times\mu$, a set of ordered
triples, and then observe that
$\kappa(\lambda\mu)=\kappa\lambda\mu=(\kappa\lambda)\mu$.  Multiplication of
cardinal numbers actually provides an especially clear illustration of this
tendency, if one considers the infinitary version.  Whereas a finitary product
of cardinals, though it may be taken all at once as described, may also be
broken down in terms of iterated binary products, an infinitary product of
cardinals cannot be written as a limit of finitary products in the obvious
fashion; it must be taken all at once.  But with Jacobsthal multiplication --
unlike, say, with ordinary multiplication of ordinals, where the infinitary
product has a clear order-theoretic meaning -- it's not clear what it would mean
to take the product all at once, how one would define it other than as a limit
of iterated binary products.  Even though the infinitary version was stated in
the form of relation between arities, for now those higher arities remain simply
a notational convention.  (Infinitary natural multiplication has a lesser
version of the same problem, of course,  since there is still no known
interpretation of the infinitary natural product other than as a limit; but
there at least finite products make sense taken all at once, without recourse to
iteration.)

So we ask the questions:

\begin{quest}
Can Theorem~\ref{jacthm} be proven by giving an order-theoretic
interpretation to both sides?  Can the same be done for the various parts of
Theorem~\ref{jaccor} and Theorem~\ref{jaccorinf}?
\end{quest}

\begin{quest}
Can the associativity of Jacobsthal multiplication be proven by finding a
natural way of interpreting $\alpha\times\beta\times\gamma$ without first
inserting parentheses?  Can the same be done for the infinitary version, finding
a way of interpreting $\bigtimes_i \alpha_i$ other than as a limit?
\end{quest}

To go in a different direction, rather than restricting surreal operations to
the ordinals, or trying to define a natural exponentiation on the ordinals
analogous to surreal exponentiation, one could also attempt to extend the
ordinary ordinal operations, or these intermediate ones, to the surreal numbers.
This was accomplished for ordinary addition by Conway \cite[Ch.\ 15]{ONAG};
indeed, he extended it to all games, not just numbers.  For ordinary
multiplication, there is a definition of S.\ Norton which was proven by P.\
Keddie \cite{keddie} to work for surreal numbers written in a particular form,
namely, those written with no reversible options; see his paper for more.  It
remains to be seen whether this can be done for Jacobsthal multiplication, or
for any of the exponentiation operations considered here; Keddie \cite{keddie}
gives reasons why this may be difficult for exponentiation.

\section{Super-Jacobsthal exponentiation}
\label{newsec}

Having discussed Jacobsthal's operations, there is still one spot missing from
Table~\ref{thetable}: The transfinite iteration of natural multiplication, or
``super-Jacobsthal exponentiation'', as we call it here.  (Rather, it is the one
spot still missing that actually exists.)  As mentioned earlier,
it was considered briefly by De Jongh and Parikh \cite{wpo}, but has otherwise
remained mostly unexplored.

\begin{defn}
\label{superdef}
We define $\alpha^{\otimes\beta}$ by
\begin{enumerate}
\item For any $\alpha$, $\alpha^{\otimes0}:=1$.
\item For any $\alpha$ and $\beta$, $\alpha^{\otimes (S\beta)}:=
(\alpha^{\otimes\beta})\otimes\alpha$.
\item If $\beta$ is a limit ordinal,
$\alpha^{\otimes\beta}:=\lim_{\gamma<\beta} (\alpha^{\otimes\gamma})$.
\end{enumerate}
\end{defn}

An equivalent way of stating this, as mentioned earlier, is that
\[ \alpha^{\otimes\beta} = \bigotimes_{i<\beta} \alpha. \]

Before we continue, it is worth noting that all the notions of multiplication
and exponentiation considered here are in fact different.  An example is
provided by considering $(\omega+2)(\omega+2)$, or $(\omega+2)^2$, since one has
the equations
\begin{eqnarray*}
(\omega+2)^2 & = & \omega^2 + \omega 2 + 2,\\
(\omega+2)^{\times2} & = & \omega^2 + \omega 2 + 4,\\
(\omega+2)^{\otimes2} & = & \omega^2 + \omega 4 + 4.\\
\end{eqnarray*}
With Definition~\ref{superdef} in hand, we can now state:

\begin{thm}
\label{mainthm}
For any ordinals $\alpha$, $\beta$, and $\gamma$, one has
\[ \alpha^{\otimes(\beta\oplus\gamma)} = 
(\alpha^{\otimes\beta}) \otimes (\alpha^{\otimes\gamma}).\]
That is to say, super-Jacobsthal exponentiation converts natural addition to
natural multiplication.
\end{thm}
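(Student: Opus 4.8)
The plan is to fix $\alpha$ (the cases $\alpha\le 1$ being trivial, so assume $\alpha\ge 2$) and to call an ordinal $\gamma$ \emph{good} if $\alpha^{\otimes(\beta\oplus\gamma)}=(\alpha^{\otimes\beta})\otimes(\alpha^{\otimes\gamma})$ holds for every $\beta$. First I would observe that the good ordinals are closed under $\oplus$: if $\gamma_1,\gamma_2$ are good, then for any $\beta$, applying goodness of $\gamma_2$ with base $\beta\oplus\gamma_1$, then goodness of $\gamma_1$ with base $\beta$, then goodness of $\gamma_2$ with base $\gamma_1$, together with associativity of $\oplus$ and of $\otimes$, collapses $\alpha^{\otimes(\beta\oplus(\gamma_1\oplus\gamma_2))}$ to $(\alpha^{\otimes\beta})\otimes(\alpha^{\otimes(\gamma_1\oplus\gamma_2)})$. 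Since $\gamma=1$ is good (that is exactly the recursive clause $\alpha^{\otimes(S\beta)}=(\alpha^{\otimes\beta})\otimes\alpha$, using $\beta\oplus 1=S\beta$), and every ordinal is a finite natural sum of powers of $\omega$ via its Cantor normal form, this reduces the theorem to showing that each $\gamma=\omega^\delta$ with $\delta\ge 1$ is good.

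The second step handles $\gamma=\omega^\delta$ by splitting the base. Writing $\beta=\mu+\xi$, where $\mu$ collects the Cantor-normal-form terms of $\beta$ of exponent $\ge\delta$ and $\xi<\omega^\delta$ collects those of exponent $<\delta$, one checks directly from the definitions that $\beta\oplus\omega^\delta=\mu+\omega^\delta+\xi$ (the point is that $\omega^\delta$ merges into the high part, and these ordinary sums agree with the corresponding natural sums because the exponents already fall in decreasing blocks). Here $\mu$ is $0$ or a limit ordinal. Granting the ``ordinary-sum law'' below, I would then compute $\alpha^{\otimes(\mu+\omega^\delta+\xi)}=(\alpha^{\otimes\mu})\otimes(\alpha^{\otimes\omega^\delta})\otimes(\alpha^{\otimes\xi})$ and, recombining, $(\alpha^{\otimes\mu})\otimes(\alpha^{\otimes\xi})=\alpha^{\otimes(\mu+\xi)}=\alpha^{\otimes\beta}$; commutativity and associativity of $\otimes$ then rearrange this into $(\alpha^{\otimes\beta})\otimes(\alpha^{\otimes\omega^\delta})$, as desired.

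Everything therefore rests on the \emph{ordinary-sum law}: if $\mu$ is $0$ or a limit ordinal, then $\alpha^{\otimes(\mu+\nu)}=(\alpha^{\otimes\mu})\otimes(\alpha^{\otimes\nu})$ for all $\nu$. I would prove this by transfinite induction on $\nu$. The successor step is immediate from the definition together with commutativity and associativity of $\otimes$ (this is exactly where natural, rather than ordinary, multiplication is forgiving). The limit step is the heart of the matter, since it is where the failure of right-continuity of both operations must be tamed, and it needs two auxiliary facts. First, \textbf{Lemma A}: for $\alpha\ge 2$ and $\mu$ a limit ordinal, $\alpha^{\otimes\mu}$ is additively indecomposable, i.e.\ a power of $\omega$; I would prove this directly by noting that for $\eta,\zeta<\alpha^{\otimes\mu}$ one may pick $\mu'<\mu$ with $\eta,\zeta<\alpha^{\otimes\mu'}$ and then $\eta+\zeta<(\alpha^{\otimes\mu'})\cdot 2\le(\alpha^{\otimes\mu'})\otimes 2\le(\alpha^{\otimes\mu'})\otimes\alpha=\alpha^{\otimes(S\mu')}<\alpha^{\otimes\mu}$, using $\alpha\ge 2$, monotonicity of $\otimes$, and the inequality $\alpha\beta\le\alpha\otimes\beta$ recorded in the introduction. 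Second, \textbf{Lemma B}: natural multiplication by a fixed power of $\omega$ is continuous in the other argument, i.e.\ $(\sup_i c_i)\otimes\omega^\rho=\sup_i(c_i\otimes\omega^\rho)$; this is a routine but slightly delicate computation on Cantor normal forms (multiplying by $\omega^\rho$ merely replaces each exponent $c_j$ by $c_j\oplus\rho$, preserving order), carried out by induction on the limit. Writing $\alpha^{\otimes\mu}=\omega^\rho$ from Lemma A, Lemma B lets the fixed factor $\alpha^{\otimes\mu}$ pass through the limit defining $\alpha^{\otimes\nu}$, completing the induction.

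The main obstacle is precisely this limit step, and more specifically Lemma A: the whole theorem hinges on the fact that iterating natural multiplication through a limit always lands on a power of $\omega$, which is what makes the surviving factor continuous and so lets the two non-continuous operations cooperate. Once Lemma A and the bookkeeping of Lemma B are in place, the successor steps and the two reductions above are purely formal manipulations with the associative and commutative laws already recorded for $\oplus$ and $\otimes$.
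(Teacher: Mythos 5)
Your reduction framework is sound and genuinely different from the paper's proof, but the argument collapses at exactly the point you call the heart of the matter: both Lemma B and the ``ordinary-sum law'' are false as stated. For Lemma B, take $c_i=\omega^i$ for $i<\omega$ and $\rho=1$: then $\sup_i(c_i\otimes\omega)=\sup_i\omega^{i\oplus 1}=\sup_i\omega^{i+1}=\omega^\omega$, while $(\sup_i c_i)\otimes\omega=\omega^\omega\otimes\omega=\omega^{\omega\oplus 1}=\omega^{\omega+1}$. Multiplying by $\omega^\rho$ does replace each Cantor-normal-form exponent $e$ by $e\oplus\rho$ while preserving order, as you say, but $\oplus$ is itself discontinuous, and that discontinuity passes straight through the exponents. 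Consequently the ordinary-sum law fails too: one checks by an easy induction that $\omega^{\otimes\beta}=\omega^\beta$ for every $\beta$ (the successor step uses $\omega^\beta\otimes\omega=\omega^{\beta\oplus 1}=\omega^{\beta+1}$, and the limit step is continuity of ordinary exponentiation), so taking $\alpha=\omega$, $\mu=\omega$, $\nu=\omega^2$ gives $\alpha^{\otimes(\mu+\nu)}=\omega^{\otimes(\omega+\omega^2)}=\omega^{\otimes\omega^2}=\omega^{\omega^2}$, whereas $\alpha^{\otimes\mu}\otimes\alpha^{\otimes\nu}=\omega^{\omega}\otimes\omega^{\omega^2}=\omega^{\omega\oplus\omega^2}=\omega^{\omega^2+\omega}$. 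So your transfinite induction on all $\nu$ is attempting to prove a false statement, and it is precisely the limit step that breaks (here at $\nu=\omega^2$ with $\mu=\omega$: $\sup_{\nu'<\omega^2}(\omega\oplus\nu')=\omega^2<\omega\oplus\omega^2$).

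The structure around this gap is salvageable in principle, which is why the plan looks plausible: the three instances of the law you actually invoke in Step 2 all have every exponent of $\mu$ at least $\delta$ and every exponent of $\nu$ at most $\delta$, so that $\mu+\nu=\mu\oplus\nu$ and these instances are true (being cases of the theorem itself). A repair would restrict the induction to such compatible pairs (say $\nu<\omega^{\delta+1}$ with the exponents of $\mu$ all $\ge\delta$) and replace Lemma B by a correspondingly restricted continuity statement; but proving that restricted continuity is where the real content of the theorem lives --- it is not routine bookkeeping, since as shown above the unrestricted version is simply wrong. Lemma A, by contrast, is correct, and your proof of it is fine; Step 1 (closure of good ordinals under $\oplus$, goodness of $1$, reduction to $\gamma=\omega^\delta$ via Cantor normal form) is also correct. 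For comparison, the paper sidesteps any continuity claim for $\otimes$ entirely: it splits on the base, handling finite $\alpha$ computationally via $a^{\otimes\beta}=\omega^{\omega^{-1}\beta'}a^b$, and handling infinite $\alpha$ by a simultaneous induction on $\beta$ and $\gamma$ in which the two sides are shown to be suprema of mutually cofinal sets, using Conway's recursive characterization of $\otimes$ together with a degree argument ($\deg\alpha^{\otimes\beta}$ strictly increasing in $\beta$ when $\alpha$ is infinite).
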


Before we prove this theorem, let us make some further notes.  Once it is
proven, it will be straightforward to prove by transfinite induction
that
\begin{thm}
\label{maincor}
For any ordinals $\alpha$, $\beta$, and $\gamma$, one has
\[ \alpha^{\otimes(\beta\times\gamma)} = 
(\alpha^{\otimes\beta})^{\otimes\gamma}.\]
That is to say, the super-Jacobsthal exponential of a Jacobsthal product is an
iterated super-Jacobsthal exponential.

More generally, given ordinals $\alpha$ and $\gamma$ and a family of ordinals
$\beta_i$ indexed by $\gamma$, one has
\[ \alpha^{\otimes(\bigoplus_{i<\gamma} \beta_i)} =
\bigotimes_{i<\gamma} \alpha^{\otimes \beta_i}. \]
\end{thm}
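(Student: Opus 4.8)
The plan is to prove the general, infinitary statement
\[ \alpha^{\otimes(\bigoplus_{i<\gamma}\beta_i)} = \bigotimes_{i<\gamma}\alpha^{\otimes\beta_i} \]
by transfinite induction on $\gamma$, and then to read off the first equation as the special case in which every $\beta_i$ equals a fixed $\beta$: since $\beta\times\gamma = \bigoplus_{i<\gamma}\beta$ and, by the definition of super-Jacobsthal exponentiation as iterated natural multiplication, $(\alpha^{\otimes\beta})^{\otimes\gamma} = \bigotimes_{i<\gamma}\alpha^{\otimes\beta}$, the constant-family instance of the infinitary law is exactly $\alpha^{\otimes(\beta\times\gamma)} = (\alpha^{\otimes\beta})^{\otimes\gamma}$. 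Here the infinitary operations $\bigoplus_{i<\gamma}$ and $\bigotimes_{i<\gamma}$ are understood via the evident transfinite recursion (the empty sum being $0$ and the empty product $1$, one extra summand or factor adjoined at successor stages, and suprema taken at limit stages).

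The base and successor cases are the routine part. For $\gamma=0$ both sides are $1$. For $\gamma=S\delta$ I would write $\bigoplus_{i<S\delta}\beta_i=(\bigoplus_{i<\delta}\beta_i)\oplus\beta_\delta$, apply Theorem~\ref{mainthm} to split the exponent as
\[ \alpha^{\otimes((\bigoplus_{i<\delta}\beta_i)\oplus\beta_\delta)} = \alpha^{\otimes(\bigoplus_{i<\delta}\beta_i)}\otimes\alpha^{\otimes\beta_\delta}, \]
and then feed the inductive hypothesis into the first factor, obtaining $(\bigotimes_{i<\delta}\alpha^{\otimes\beta_i})\otimes\alpha^{\otimes\beta_\delta}=\bigotimes_{i<S\delta}\alpha^{\otimes\beta_i}$. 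This is where Theorem~\ref{mainthm} does all the work; nothing beyond the definitions enters.

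The limit case is the real obstacle. When $\gamma=\lambda$ is a limit, set $\sigma_\delta=\bigoplus_{i<\delta}\beta_i$; the left-hand side is $\alpha^{\otimes(\sup_{\delta<\lambda}\sigma_\delta)}$, while the right-hand side is, by the definition of the infinitary product together with the inductive hypothesis, $\sup_{\delta<\lambda}\alpha^{\otimes\sigma_\delta}$. So what must be shown is precisely that $\mu\mapsto\alpha^{\otimes\mu}$ commutes with this supremum, i.e.\ that super-Jacobsthal exponentiation is continuous in its exponent. The plan is first to record that $\mu\mapsto\alpha^{\otimes\mu}$ is monotone: for $\alpha\geq 2$ it is strictly increasing, since $\alpha^{\otimes S\mu}=\alpha^{\otimes\mu}\otimes\alpha>\alpha^{\otimes\mu}$ whenever $\alpha^{\otimes\mu}\geq 1$ (which holds for all $\mu$ by an easy induction once $\alpha\geq 1$), while for $\alpha=1$ it is constantly $1$; the degenerate case $\alpha=0$, where monotonicity fails, I would dispatch by direct computation, checking that both sides collapse to $1$ if all $\beta_i$ vanish and to $0$ otherwise. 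Given monotonicity, continuity at the relevant supremum follows by the standard dichotomy: if the partial sums $\sigma_\delta$ are eventually constant the claim is immediate, and otherwise $\sigma=\sup_{\delta<\lambda}\sigma_\delta$ is a genuine limit ordinal with the $\sigma_\delta$ cofinal in it, so $\alpha^{\otimes\sigma}=\sup_{\nu<\sigma}\alpha^{\otimes\nu}=\sup_{\delta<\lambda}\alpha^{\otimes\sigma_\delta}$ by cofinality and monotonicity. The one subtlety to watch is exactly this dichotomy: the ordinal $\sigma$ defining $\bigoplus_{i<\lambda}\beta_i$ need not itself be a limit, so one cannot blindly invoke the limit clause of the definition of $\alpha^{\otimes(-)}$ and must instead match the two suprema by hand.
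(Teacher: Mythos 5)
Your proposal is correct and follows essentially the same route as the paper: induction on $\gamma$ for the infinitary statement, with the base and successor cases handled exactly as in the paper via Theorem~\ref{mainthm}, the limit case via continuity of $\alpha^{\otimes\beta}$ in $\beta$, and the finitary equation recovered by taking all $\beta_i$ equal to $\beta$. The only difference is that where the paper dismisses continuity in the exponent as ``immediate from the definition,'' you spell out the justification (monotonicity, the dichotomy on whether the partial sums stabilize, and the degenerate case $\alpha=0$), which is a legitimate filling-in of detail rather than a different argument.
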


Once this is proven, it will complete Tables~\ref{anothertable} and
\ref{tableinf}.

Note the appearance of Jacobsthal multiplication -- not ordinary or natural
multiplication -- on the left-hand side of the first equation.  This occurs
because Theorem~\ref{maincor} comes from transfinitely iterating
Theorem~\ref{mainthm}, and when one transfinitely iterates natural addition, one
gets Jacobsthal multiplication.

Now we prove Theorem~\ref{mainthm}.  This will require a tiny bit more setup.
First, some notation and two lemmas:

\begin{notn}
For an ordinal $\alpha$ which is either $0$ or a limit ordinal,
$\omega^{-1}\alpha$ will denote the unique ordinal $\beta$ such that
$\alpha=\omega\beta$.
\end{notn}

\begin{notn}
For an ordinal $\alpha>0$, $\deg \alpha$ will denote the largest exponent
appearing in the Cantor normal form of $\alpha$.
\end{notn}

\begin{lem}
\label{fincomput}
Suppose $a>1$ is finite and and let $\beta$ be an ordinal.  Write
$\beta=\beta'+b$, where $\beta'$ is $0$ or a limit ordinal and $b$ is finite.
Then \[ a^{\otimes\beta} = \omega^{\omega^{-1}\beta'} a^b. \]
\end{lem}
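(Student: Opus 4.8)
The plan is to prove the formula by transfinite induction on $\beta$. Throughout, given an ordinal $\gamma$ I write $\gamma = \gamma' + c$ with $\gamma'$ equal to $0$ or a limit ordinal and $c$ finite, so that the asserted value is $\omega^{\omega^{-1}\gamma'} a^c$; since $a$ is finite this is a genuine Cantor normal form, namely the single term $\omega^{\omega^{-1}\gamma'}$ with finite coefficient $a^c$ (or simply the finite number $a^c$ when $\gamma' = 0$). The base case $\beta = 0$ is immediate. For the successor case, if $\beta = S\gamma$ and $\gamma = \gamma' + c$, then $\beta = \gamma' + (c+1)$, so the induction hypothesis gives $a^{\otimes \gamma} = \omega^{\omega^{-1}\gamma'} a^c$ and hence $a^{\otimes\beta} = a^{\otimes\gamma} \otimes a$. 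As $a = \omega^0 a$ is finite, forming this natural product leaves the exponent fixed, because $(\omega^{-1}\gamma') \oplus 0 = \omega^{-1}\gamma'$, and multiplies the coefficient, yielding $\omega^{\omega^{-1}\gamma'} a^{c+1}$, exactly the claimed value for $\beta$ (and when $\gamma' = 0$ this is just $a^c \otimes a = a^{c+1}$).

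The only substantial case is when $\beta$ is a limit ordinal, so that $\beta' = \beta$ and we must show $a^{\otimes\beta} = \sup_{\gamma<\beta} a^{\otimes\gamma} = \omega^{\omega^{-1}\beta}$. Set $e = \omega^{-1}\beta$, so $\beta = \omega e$ with $e > 0$. For the upper bound, fix $\gamma < \beta$; its limit part satisfies $\gamma' \le \gamma < \omega e$, and since $\gamma' = \omega\cdot(\omega^{-1}\gamma')$ while left multiplication by $\omega$ is strictly increasing, this forces $s := \omega^{-1}\gamma' < e$. The induction hypothesis then gives $a^{\otimes\gamma} = \omega^s a^c < \omega^{s+1} \le \omega^e$, so every term of the supremum lies below $\omega^e$.

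For cofinality, note that for each $s < e$ and each finite $n$ the ordinal $\gamma = \omega s + n$ satisfies $\gamma < \omega(s+1) \le \omega e = \beta$ and has limit part $\omega s$, so by the induction hypothesis $a^{\otimes\gamma} = \omega^s a^n$. Since $a > 1$ we have $\sup_{n<\omega} \omega^s a^n = \omega^{s+1}$, and as $s$ ranges over ordinals below $e$ the ordinals $s+1$ are cofinal in $e$ (indeed reach $e$ when $e$ is a successor); continuity of base-$\omega$ exponentiation then gives $\sup_{s<e} \omega^{s+1} = \omega^e$. Combined with the upper bound, this yields $a^{\otimes\beta} = \omega^e = \omega^{\omega^{-1}\beta}$, completing the induction. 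This is the unique point where the hypothesis $a > 1$ enters, through $\sup_{n<\omega} a^n = \omega$.

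I expect the limit case to be the main obstacle: one has to read off the limit part $\gamma'$ of an arbitrary $\gamma < \beta$, pass it through $\omega^{-1}$, and then argue on two fronts --- that no term $\omega^{\omega^{-1}\gamma'} a^c$ overshoots $\omega^{\omega^{-1}\beta}$, and that these terms are nevertheless cofinal in it. The successor case, by contrast, is a routine single-term natural-product computation, and the purely finite case is subsumed by taking $\gamma' = 0$ throughout, where $\otimes$ reduces to ordinary multiplication.
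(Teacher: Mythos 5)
Your proof is correct, and while it follows the paper's overall skeleton (transfinite induction on $\beta$, with identical base and successor cases), your treatment of the limit case takes a genuinely different route. The paper splits limit ordinals into two subcases according to whether $\beta$ is divisible by $\omega^2$: when $\beta=\gamma'+\omega$ it computes the limit directly from the inductive hypothesis, and when $\beta=\omega^2\gamma$ it argues via the degree function, using that $\deg$ is increasing along the sequence $a^{\otimes\gamma}$ for limit $\gamma<\beta$ together with the continuity of ``division by $\omega$'' to squeeze $a^{\otimes\beta}$ between $\omega^{\omega^{-1}\beta}$ from both sides. You instead handle all limit ordinals uniformly: writing $\beta=\omega e$, you get the upper bound from $\omega^{-1}\gamma'<e$ (via strict monotonicity of left multiplication by $\omega$) so that every $a^{\otimes\gamma}=\omega^s a^c<\omega^{s+1}\le\omega^e$, and cofinality from the explicit family $\gamma=\omega s+n$, using $\sup_{n<\omega}a^n=\omega$ and continuity of $\omega^{(\cdot)}$, with the successor-versus-limit behavior of $e$ absorbed into the observation that the ordinals $s+1$ for $s<e$ reach or are cofinal in $e$. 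Your version avoids both the case split and the degree-function machinery, making it somewhat more self-contained and making visible exactly where $a>1$ is used; the paper's version, in exchange, makes the ``$\beta=\gamma'+\omega$'' subcase an immediate one-line limit computation and reuses $\deg$, which it needs elsewhere anyway (e.g., in the proof of Theorem~\ref{mainthm}).
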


\begin{proof}
We induct on $\beta$.
If $\beta=0$, then both sides are equal to $1$.  If $\beta$ is a successor
ordinal, say $\beta=S\gamma$, then by the inductive hypothesis,
\[ a^{\otimes\gamma} = \omega^{\omega^{-1}\gamma'} a^c,\]
where we write $\gamma=\gamma'+c$ analogously to $\beta=\beta'+b$.  As
$\beta=S\gamma$, we have $\beta'=\gamma'$ and $b=c+1$.  Thus
\[ a^{\otimes\beta} = a^{\otimes\gamma} \otimes a =
(\omega^{\omega^{-1}\gamma'} a^c) \otimes a =
\omega^{\omega^{-1}\beta'} a^b.\]

If $\beta$ is a limit ordinal, we have two further cases, depending on whether
or not $\beta$ is of the form $\omega^2 \gamma$ for some ordinal $\gamma$.  If
not, then $\beta$ is of the form $\gamma'+\omega$, where $\gamma'$ is either $0$
or a limit ordinal.  This means that $\beta$ is the limit of $\gamma'+c$ for
finite $c$.  So then by the inductive hypothesis,
\[ a^{\otimes\beta} = \lim_{c<\omega} (\omega^{\omega^{-1}\gamma'}a^c)
= \omega^{S(\omega^{-1}\gamma')} = \omega^{\omega^{-1}\beta'},\]
as required.

If so, then we once again consider $\deg a^{\otimes\beta}$.  Since $\beta$
is of the form $\omega^2 \gamma$, $\beta$ is the limit of all ordinals less than
it of the form $\omega \gamma$, i.e., it is the limit of all limit ordinals less
than it.  And for $\gamma<\beta$ a limit ordinal, by the inductive hypothesis,
$\deg a^{\gamma} = \omega^{-1}\gamma$.  So again applying the fact that the
$\deg$ function is increasing, we have that $\deg a^{\otimes\beta}\ge
\omega^{-1}\beta$, i.e., that $a^{\otimes\beta} \ge
\omega^{\omega^{-1}\beta}$.
(Here we also use the continuity of ``division by $\omega$'', which follows from
the continuity of left-multiplication by $\omega$.)
Conversely, for $\gamma<\beta$ with $\gamma$ a limit ordinal, one has
$\omega^{-1}\gamma<\omega^{-1}\beta$, and so
$a^{\otimes\gamma}<\omega^{\omega^{-1}\beta}$; thus one has
$a^{\otimes\beta}\le\omega^{\omega^{-1}\beta}$.  So we conclude, as needed,
that $a^{\otimes\beta}=\omega^{\omega^{-1}\beta}$.  This proves the lemma.
\end{proof}

\begin{notn}
For ordinals $\alpha$ and $\beta$, $\alpha\ominus\beta$ will denote the smallest
$\gamma$ such that $\beta\oplus\gamma\ge\alpha$.  For convenience, we will also
define
\[f_{\alpha,\beta}(\alpha',\beta') = 
((\alpha\otimes\beta')\oplus(\alpha'\otimes\beta))\ominus(\alpha'\otimes\beta').
\]
\end{notn}

Note that with this definition, we can rewrite Conway's definition of
$\alpha\otimes\beta$ as

\[ \alpha\otimes\beta = \sup\nolimits'\{f_{\alpha,\beta}(\alpha',\beta'):
\alpha'<\alpha,\beta'<\beta\}. \]

\begin{lem}
\label{monoton}
For fixed $\alpha$ and $\beta$, $f_{\alpha,\beta}(\alpha',\beta')$ is increasing
in $\alpha'$ and $\beta'$.
\end{lem}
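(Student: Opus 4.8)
The plan is to cut the two‑variable statement down to a single one‑variable statement by symmetry, and then to prove that one by passing to the surreal field, where $\oplus$ and $\otimes$ become ordinary field addition and multiplication and the ordinal order becomes the ordered‑field order (recall from the introduction that the ordinals with the natural operations embed in the surreals). First I would record the symmetry $f_{\alpha,\beta}(\alpha',\beta')=f_{\beta,\alpha}(\beta',\alpha')$, which is immediate from the commutativity of $\otimes$. Hence it suffices to show that, with $\beta'\le\beta$ held fixed, the map $\alpha'\mapsto f_{\alpha,\beta}(\alpha',\beta')$ is weakly increasing; monotonicity in $\beta'$ (for $\alpha'\le\alpha$) then follows by applying the same statement to $f_{\beta,\alpha}$. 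Note that in the range where this lemma is used, namely $\alpha'<\alpha$ and $\beta'<\beta$, we always have $\beta'\le\beta$ and $\alpha'\le\alpha$, so both hypotheses are met.

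To set up the one‑variable claim, write $P(\alpha'):=(\alpha\otimes\beta')\oplus(\alpha'\otimes\beta)$ and $Q(\alpha'):=\alpha'\otimes\beta'$, so that $f_{\alpha,\beta}(\alpha',\beta')=P(\alpha')\ominus Q(\alpha')$ is by definition the least ordinal $\gamma$ with $Q(\alpha')\oplus\gamma\ge P(\alpha')$. Fix $\alpha_1'\le\alpha_2'$ and abbreviate $f(\alpha_i')$ for $f_{\alpha,\beta}(\alpha_i',\beta')$. The crucial observation is that, because $f(\alpha_1')$ is the \emph{least} ordinal satisfying its defining inequality, to conclude $f(\alpha_1')\le f(\alpha_2')$ it is enough to verify that the ordinal $f(\alpha_2')$ is itself a witness for the pair $\alpha_1'$, i.e.\ that
\[ Q(\alpha_1')\oplus f(\alpha_2')\ge P(\alpha_1'). \]

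To verify this I would work in the surreal field, writing $-$ for its subtraction. From the definition of $f(\alpha_2')$ we have $Q(\alpha_2')\oplus f(\alpha_2')\ge P(\alpha_2')$, and since the order is translation‑invariant this rearranges to $f(\alpha_2')\ge P(\alpha_2')-Q(\alpha_2')$. Thus it suffices to prove the purely field‑theoretic inequality $P(\alpha_2')-Q(\alpha_2')\ge P(\alpha_1')-Q(\alpha_1')$, for then $f(\alpha_2')\ge P(\alpha_1')-Q(\alpha_1')$, which moves back across to give exactly the displayed inequality. Cancelling the common summand $\alpha\otimes\beta'$ and applying distributivity of $\otimes$ over subtraction, the difference of the two sides collapses to $(\alpha_2'-\alpha_1')\otimes(\beta-\beta')$, which is $\ge 0$ because both factors are nonnegative ($\alpha_1'\le\alpha_2'$ and $\beta'\le\beta$) and the surreals form an ordered field. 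This completes the reduction.

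The main subtlety to watch is that $\ominus$ is a clamped (rounded‑up) subtraction, not a genuine group operation, so one cannot naively cancel terms inside it; the device that avoids this is to never compute $f$ exactly, but only to use $f(\alpha_2')$ as a \emph{witness} for the minimality defining $f(\alpha_1')$, doing the honest cancellation in the surreal field rather than under $\ominus$. I would also flag the role of the hypotheses $\beta'\le\beta$ and $\alpha'\le\alpha$: they are precisely what make the factors $\beta-\beta'$ and $\alpha_2'-\alpha_1'$ nonnegative, and they cannot be dropped, since for $\beta'>\beta$ the factor $\beta-\beta'$ is negative and $f$ is in fact \emph{decreasing} in $\alpha'$ (as one sees already with small finite examples). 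Within the intended range $\alpha'<\alpha$, $\beta'<\beta$ this never arises, so the lemma holds as stated.
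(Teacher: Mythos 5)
Your proposal is correct and takes essentially the same route as the paper: the paper's proof also passes to the surreal field, observing that $f_{\alpha,\beta}(\alpha',\beta')$ is the least ordinal bounding the surreal number $\alpha\beta'+\alpha'\beta-\alpha'\beta' = \alpha\beta-(\alpha-\alpha')(\beta-\beta')$, which is visibly increasing in both variables simultaneously since $\alpha-\alpha'$ and $\beta-\beta'$ are nonnegative. Your witness/minimality step (which is just the proof that ordinal round-up is monotone) and the one-variable factorization $(\alpha_2'-\alpha_1')(\beta-\beta')\ge 0$ combined with the symmetry reduction amount to an unpacked version of that same two-variable computation.
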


\begin{proof}
Observe that $f_{\alpha,\beta}(\alpha',\beta')$ is the smallest ordinal greater
than the surreal number $\alpha'\beta+\alpha\beta'-\alpha'\beta'$ (where these
operations are performed in the surreal numbers, and are therefore natural
operations on the corresponding ordinals).  This expression is increasing in
$\alpha'$ and $\beta'$, since it can be written as
$\alpha\beta-(\alpha-\alpha')(\beta-\beta')$.  Therefore so is
$f_{\alpha,\beta}(\alpha',\beta')$, the smallest ordinal greater than it.
\end{proof}

Now, the proof:

\begin{proof}[Proof of Theorem~\ref{mainthm}]
We split this into several cases depending on the value of $\alpha$.  If
$\alpha\in\{0,1\}$ the theorem is obvious.

Now we have the case where $\alpha>1$ is finite; in this case we will use
Lemma~\ref{fincomput} to give a computational proof.  Let us rename $\alpha$ to
$a$ to make it clear that it is finite.  Let $\beta=\beta'+b$ and
$\gamma=\gamma'+c$ where $\beta'$ and $\gamma'$ are limit ordinals or $0$, and
$b$ and $c$ are finite.

So observe first that
\[ \omega^{-1}(\beta'\oplus\gamma') =
\omega^{-1}\beta' \oplus \omega^{-1}\gamma'.\]
This can be seen as, if $\beta'=\omega\beta''$ and $\gamma'=\omega\gamma''$,
then
\[ \omega(\beta''\oplus \gamma'') = \omega\beta'' \oplus \omega\gamma'', \]
which can be seen by comparing Cantor normal forms.  (This can also be seen by
noting that for any ordinal $\delta$, $\omega\delta=\omega\times\delta$, since
if $\varepsilon$ is a limit ordinal then
$\varepsilon\oplus\omega=\varepsilon+\omega$, and by induction this quantity
will always be a limit ordinal.)

Now, $\beta\oplus\gamma$
can be written as $(\beta'\oplus\gamma')+(b+c)$; here, $\beta'\oplus\gamma'$ is
either $0$ or a limit ordinal, and $b+c$ is finite.  Thus,
\begin{multline*}
a^{\otimes(\beta\oplus\gamma)} = 
\omega^{\omega^{-1}(\beta'\oplus\gamma')} a^{b+c} =
\omega^{(\omega^{-1}\beta')\oplus(\omega^{-1}\gamma')} a^b a^c =\\
(\omega^{\omega^{-1}\beta'} a^b) \otimes
	(\omega^{\omega^{-1}\gamma'} a^c) =
a^{\otimes \beta} \otimes a^{\otimes \gamma},
\end{multline*}
as required.

This leaves the case where $\alpha$ is infinite.  In this case we give an
inductive proof, inducting on $\beta$ and $\gamma$.  If $\beta=0$ or $\gamma=0$
the theorem is obvious.  If $\gamma$ is a successor ordinal, say
$\gamma=S\gamma'$, then
\begin{multline*}
\alpha^{\otimes(\beta\oplus\gamma)} = 
\alpha^{\otimes(\beta\oplus S\gamma')} =
\alpha^{\otimes S(\beta\oplus\gamma')} =
\alpha^{\otimes(\beta\oplus\gamma')}\otimes\alpha = \\
\alpha^{\otimes\beta}\otimes\alpha^{\otimes\gamma'}\otimes\alpha =
\alpha^{\otimes\beta}\otimes\alpha^{\otimes\gamma},
\end{multline*}
as needed.  If $\beta$ is a successor, the proof is similar.

This leaves the case where $\beta$ and $\gamma$ are both limit ordinals.  As
before, not only are $\beta$ and $\gamma$ limit ordinals but so is
$\beta\oplus\gamma$.  So
\begin{multline}
\label{eqn23}
\alpha^{\otimes\beta\oplus\gamma} =
\sup\{\alpha^{\otimes\delta}:\delta<\beta\oplus\gamma\} = \\
\sup(\{\alpha^{\otimes(\beta'\oplus\gamma)}:\beta'<\beta\}\cup
\{\alpha^{\otimes(\beta\oplus\gamma')}:\gamma'<\gamma\})
\end{multline}

On the other hand,
\begin{multline}
\label{eqn24}
\alpha^{\otimes\beta}\otimes\alpha^{\otimes\gamma} =
\sup\{f_{\alpha^{\otimes\beta},\alpha^{\otimes\gamma}}(\delta,\varepsilon):
\delta<\alpha^{\otimes\beta},\varepsilon<\alpha^{\otimes\gamma}\} = \\
\sup\{f_{\alpha^{\otimes\beta},\alpha^{\otimes\gamma}}
(\alpha^{\otimes\beta'},\alpha^{\otimes\gamma'}):
\beta'<\beta,\gamma<\gamma'\} =\\
\sup\{((\alpha^{\otimes\beta'}\otimes\alpha^{\otimes\gamma})\oplus
(\alpha^{\otimes\beta}\otimes\alpha^{\otimes\gamma'}))\ominus
(\alpha^{\otimes\beta'}\otimes\alpha^{\otimes\gamma'})
:\beta'<\beta,\gamma'<\gamma\} =\\
\sup\{(\alpha^{\otimes(\beta'\oplus\gamma)}\oplus
\alpha^{\otimes(\beta\oplus\gamma')})\ominus
\alpha^{\otimes(\beta'\oplus\gamma')}
:\beta'<\beta,\gamma'<\gamma\}.
\end{multline}

Note that here we have used not only the inductive hypothesis, but have also
used Lemma~\ref{monoton} and the fact that $\alpha^{\otimes\gamma}$,
$\alpha^{\otimes\beta}$, and their natural product are all limit ordinals.

So now once again we must show that the two sets we are taking the suprema of in
the final expressions of Equations~\eqref{eqn23} and \eqref{eqn24} are cofinal
with each other.  Let us call these sets $S$ and $T$, respectively.

So let us take an element of $S$; say it is
$\alpha^{\otimes(\beta'\oplus\gamma)}$ for $\beta'<\beta$.  We want to show it
is bounded above by some element of $T$.  (If instead it is of the form
$\alpha^{\otimes(\beta\oplus\gamma')}$ for $\gamma'<\gamma$, the proof is
similar.)  But certainly, choosing $\gamma'=0$,
\[ \alpha^{\otimes(\beta'\oplus\gamma)}\oplus\alpha^{\otimes\beta'}
< \alpha^{\otimes(\beta'\oplus\gamma)}\oplus\alpha^{\otimes\beta} \]
and so 
\[ \alpha^{\otimes(\beta'\oplus\gamma)} <
(\alpha^{\otimes(\beta'\oplus\gamma)}\oplus\alpha^{\otimes\beta})
\ominus\alpha^{\otimes\beta'}. \]

Conversely, say we take an element $\delta$ of $T$.  Since we assumed $\alpha$
infinite, and in general we have
$\deg(\alpha\otimes\beta)=(\deg\alpha)\oplus(\deg\beta)$, it follows that the
sequence $\deg \alpha^{\otimes\beta}$ is strictly increasing in $\beta$.  So
here, we have an element $\delta$ of $T$ given by
$(\alpha^{\otimes(\beta\oplus\gamma')} \oplus
\alpha^{\otimes(\beta'\oplus\gamma)}) \ominus
\alpha^{\otimes(\beta'\oplus\gamma')}$ for some $\beta'<\beta$ and
$\gamma'<\gamma$ and we want to determine its degree.  Now, in general, if we
have ordinals $\alpha$ and $\beta$, then
$\deg(\alpha\oplus\beta)=\max\{\deg\alpha,\deg\beta\}$, and so it follows that
if $\deg\alpha>\deg\beta$ then $\deg(\alpha\ominus\beta)=\deg\alpha$.  So here
it follows that \[\deg\delta=\max\{\deg \alpha^{\otimes(\beta'\oplus\gamma)},
\deg \alpha^{\otimes(\beta\oplus\gamma')}\}.\]

But this means we can find an element of $S$ with degree at least $\deg \delta$;
and since $\beta$ and $\gamma$ are limit ordinals, we can find an element with
degree even larger than $\deg \delta$, which in particular means that $\delta$
is less than some element of $S$.

Therefore $S$ and $T$ are cofinal and so have the supremum.  This completes the
proof.
\end{proof}

As mentioned above, this then implies Theorem~\ref{maincor}:

\begin{proof}[Proof of Theorem~\ref{maincor}]
We prove the more general version by induction on $\gamma$.  If $\gamma=0$, then
\[ \alpha^{\otimes(\bigoplus_{i<0} \beta_i)} = \alpha^{\otimes0} = 1
= \bigotimes_{i<0} \alpha^{\otimes\beta_i},\]
as needed.

If $\gamma$ is a successor ordinal, say $\gamma=S\delta$, then
\begin{multline*}
\alpha^{\otimes(\bigoplus_{i<S\delta} \beta_i)} =
\alpha^{\otimes((\bigoplus_{i<\delta}\beta_i)\oplus \beta_\delta)} =
\alpha^{\otimes(\bigoplus_{i<\delta}\beta_i)}
	\otimes\alpha^{\otimes\beta_\delta} = \\
\bigotimes_{i<\delta}(\alpha^{\otimes\beta_i})
	\otimes\alpha^{\otimes\beta_\delta} =
\bigotimes_{i<S\delta}\alpha^{\otimes\beta_i},
\end{multline*}
again as needed, where we have applied both Theorem~\ref{mainthm} and the
inductive hypothesis.

Finally, if $\gamma$ is a limit ordinal, so $\gamma=\lim_{\delta<\gamma}
\delta$, then
\[
\alpha^{\otimes(\bigoplus_{i<\gamma} \beta_i)} =
\alpha^{\otimes(\lim_{\delta<\gamma} \bigoplus_{i<\delta} \beta_i)} =
\lim_{\delta<\gamma} \alpha^{\otimes(\bigoplus_{i<\delta} \beta_i)} =
\lim_{\delta<\gamma} \bigotimes_{i<\delta} \alpha^{\otimes \beta_i} =
\bigotimes_{i<\gamma} \alpha^{\otimes \beta_i},\]
where here we have used both the inductive hypothesis and the fact that
$\alpha^{\otimes\beta}$ is continuous in $\beta$ (a fact which follows
immediately from the definition).

The restricted version then follows by letting $\beta_i=\beta$ for all $i$.
\end{proof}

Thus we see that super-Jacobsthal exponentiation admits algebraic laws similar
to those followed by ordinary exponentiation and Jacobsthal exponentiation and
complete Table~\ref{anothertable}.

\subsection{Super-Jacobsthal exponentiation: Discussion}

The theorems above raise some questions, analogous to those discussed in
Section~\ref{disc}.  Specifically:

\begin{quest}
Can Theorem~\ref{mainthm} be proven by giving an order-theoretic
interpretation to both sides?  Can the same be done for Theorem~\ref{maincor}?
\end{quest}

Of course, proving it in this way would require first finding an order-theoretic
interpretation for super-Jacobsthal exponentiation; none is currently known.

Even if one cannot do that, there is still the question of improving on the
proof of Theorem~\ref{mainthm} given here by giving a more unified proof.  The
proof given here requires separating out the case where the base $\alpha$ is
finite and handling that case ``computationally''.  A unified proof, if one
could be found, would be preferable.

\section{Natural exponentiation}
\label{noexpsec}

In this section we discuss the question of ``natural exponentiation'' and show
that there is no such thing, that Table~\ref{thetable} is complete as-is.
Table~\ref{thetable} has several vertical families of operations, defined by
transfinite iteration.  This raises the question: Can we continue further the
diagonal family in Table~\ref{thetable}), the sequence of natural operations,
and get a natural exponentiation?

Let us denote such an operation by $e(\alpha,\beta)$, where $\alpha$ is the base
and $\beta$ is the exponent.  In this section we will show that such an
operation cannot exist, unless one is willing to abandon basic properties it
ought to possess.  Now, one could produce a whole list of conditions that such
an operation might be expected to satisfy.  For instance, one might desire:

\begin{enumerate}
\item $e(\alpha,0)=1$.
\item $e(\alpha,1)=\alpha$.
\item $e(0,\alpha)=0$ for $\alpha>0$.
\item $e(1,\alpha)=1$.
\item For $\alpha>1$, $e(\alpha,\beta)$ is strictly increasing in $\beta$.
\item For $\beta>0$, $e(\alpha,\beta)$ is strictly increasing in $\alpha$.
\item $e(\alpha,\beta\oplus\gamma)=e(\alpha,\beta)\otimes e(\alpha,\gamma)$.
\item $e(\alpha,\beta\otimes\gamma)=e(e(\alpha,\beta),\gamma)$.
\item $e(\alpha\otimes\beta,\gamma)=e(\alpha,\gamma)\otimes e(\beta,\gamma)$.
\item $e(2,\alpha)>\alpha$.
\end{enumerate}

But even only a small number of these is enough to cause a contradiction.  In
this section we prove:
\begin{thm}
\label{noexpthm}
There is no natural exponentiation $e(\alpha,\beta)$ on the ordinals
satisfying the following conditions:
\begin{enumerate}
\item $e(\alpha,1)=\alpha$.
\item For $\alpha>0$, $e(\alpha,\beta)$ is weakly increasing in $\beta$.
\item $e(\alpha,\beta)$ is weakly increasing in $\alpha$.
\item $e(\alpha,\beta\oplus\gamma)=e(\alpha,\beta)\otimes e(\alpha,\gamma)$.
\item $e(\alpha,\beta\otimes\gamma)=e(e(\alpha,\beta),\gamma)$.
\end{enumerate}
The same holds if hypothesis (5) is replaced with the following hypothesis (5'):
$e(\alpha\otimes\beta,\gamma)=e(\alpha,\gamma)\otimes e(\alpha,\gamma)$.
\end{thm}

\begin{rem}
The version of this theorem where hypothesis (5') is used was also proven
independently, in slightly stronger form, by Asper\'o and Tsaprounis
\cite{longreals}, using essentially the same means.
\end{rem}

Before we go on and prove this, let us make a note about one way that one could
attempt to define $e(\alpha,\beta)$, even though we know it will not work.
Since addition and multiplication in the surreals agree with natural addition
and natural multiplication on the ordinals, one might attempt to define a
natural exponentiation based on the theory of the surreal exponential
(developed by Gonshor \cite[pp.  143--190]{gonshor}).  One could define
$e(\alpha,\beta)=\exp(\beta \log \alpha)$ for $\alpha>0$, and then define
$e(0,0)=1$ and $e(0,\beta)=0$ for $\beta>0$.  And indeed, the operation on the
surreals defined this way will satisfy all of the desiderata in the long list
above, so long as all terms involved are defined.  But there is one fatal
problem: the ordinals are not closed under this operation.  For instance,
it turns out that, using the usual notation for
surreal numbers, one has \[ \exp(\omega\log\omega) =
\omega^{\omega^{1+1/\omega}},\] which is not an ordinal.  One could attempt to
remedy this by rounding up to the next ordinal, but unsurprisingly the resulting
operation is lacking in algebraic laws.

Now, the proof:

\begin{proof}[Proof of Theorem~\ref{noexpthm}]
Suppose we had such an operation $e(\alpha,\beta)$.  Note that hypotheses (1)
and (4) together mean that if $k$ is finite and positive, then
$e(\alpha,k)=\alpha^{\otimes k}$, and in particular, if $n$ is also finite, then
$e(n,k)=n^k$.  By hypothesis (2), this means that for $n\ge 2$ we have
$e(n,\omega)\ge \omega$.  Let us define $\delta=\deg \deg e(2,\omega)$; since
$e(2,\omega)$ is infinite, this is well-defined.

Observe also that by hypothesis (5), we have for $n$ and $k$ as above,
\[ e(n^k, \alpha) = e(e(n,k),\alpha) = e(n,k\otimes\alpha) =
e(n,\alpha\otimes k) = e(e(n,\alpha),k) = e(n,\alpha)^{\otimes k}.\]
(If we had used instead the alternate hypothesis (5'), this too would prove that
$e(n^k,\alpha)=e(n,\alpha)^{\otimes k}$.)

Given any finite $n\ge 2$, choose some $k$ such that $n\le 2^k$; then by the
above and hypothesis (3),
\[ e(2,\omega) \le e(n,\omega) \le e(2,\omega)^{\otimes k} \]
and so
\[ \deg e(2,\omega) \le \deg e(n,\omega) \le (\deg e(2,\omega)) \otimes k \]
and so
\[ \deg \deg e(2,\omega) \le \deg \deg e(n,\omega) \le \deg \deg e(2,\omega), \]
i.e., $\deg \deg e(n,\omega)=\delta$.

Thus we may define a function $f:\mathbb{N}\to\mathbb{N}$ by defining $f(n)$ to
be the coefficient of $\omega^\delta$ in the Cantor normal form of $\deg
e(n,\omega)$.  Then since $e(n^k,\omega)=e(n,\omega)^{\otimes k}$, we have
$f(n^k)=kf(n)$.  And by the above and hypothesis (3) we have that $f$ is weakly
increasing, since $\deg e(n,\omega)$ is weakly increasing and no term of size
$\omega^{S\delta}$ or higher ever appears in any $\deg e(n,\omega)$.  Finally,
we have that $f(2)\ge 1$.

But no such function can exist; given natural numbers $n$ and $m$, it follows
from the above that
\[ \lfloor \log_m n \rfloor f(m) \le f(n) \le \lceil \log_m n \rceil f(m) \]
or in other words that
\[ \left\lfloor \frac{\log n}{\log m} \right\rfloor \le \frac{f(n)}{f(m)}
\le \left\lceil \frac{\log n}{\log m} \right\rceil. \]

If one takes the above and substitutes in $n^k$ for $n$, one obtains
\[ \left\lfloor k\frac{\log n}{\log m} \right\rfloor \le k\frac{f(n)}{f(m)}
\le \left\lceil k\frac{\log n}{\log m} \right\rceil. \]

But in particular, this means that
\[ k\frac{\log n}{\log m} - 1 \le k\frac{f(n)}{f(m)}
\le k\frac{\log n}{\log m} + 1, \]
or in other words, that
\[ \frac{\log n}{\log m} - \frac{1}{k} \le \frac{f(n)}{f(m)}
\le \frac{\log n}{\log m} + \frac{1}{k}; \]
since this holds for any choice of $k$, we conclude that
\[ \frac{f(n)}{f(m)} = \frac{\log n}{\log m}. \]
But the right hand side may be chosen to be irrational, for instance if $m=2$
and $n=3$; thus, the function $f$ cannot exist, and thus neither can our natural
exponentiation $e$.
\end{proof}

\begin{rem}
Note that the only use of hypotheses (1) and (4) was to show that for $k$ a
positive integer, $e(\alpha,k)=\alpha^{\otimes k}$, so strictly speaking the the
theorem could be stated with (1) and (4) replaced by this single hypothesis.
\end{rem}

\section{Comparison between the operations}
\label{compar}

In Table~\ref{thetable} it was asserted that each operation appearing in the
table is pointwise less-than-or-equal-to those appearing to the right of it in
the table.  In this section we justify that assertion.  Let us state this
formally:

\begin{prop}
\label{ineq}
For any ordinals $\alpha$ and $\beta$, one has:
\begin{enumerate}
\item $\alpha+\beta \le \alpha\oplus\beta$.
\item $\alpha\beta \le \alpha\times\beta \le \alpha\otimes\beta$.
\item $\alpha^\beta \le \alpha^{\times\beta} \le \alpha^{\otimes\beta}$.
\end{enumerate}
\end{prop}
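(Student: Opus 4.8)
The plan is to prove the three parts in the order given, since part (1) is the base comparison that feeds into part (2), and part (2) in turn feeds into part (3). The unifying observation is that, apart from the single base inequality $\alpha+\beta\le\alpha\oplus\beta$, every operation appearing in the table obeys a common successor recursion of the shape $\alpha\circ(S\beta)=(\alpha\circ\beta)\ast\alpha$ for a suitable ``addition-like'' operation $\ast$ (with empty-iteration value $0$ at the multiplication level and $1$ at the exponentiation level). This even holds for $\otimes$, which, though not defined by iteration, satisfies $\alpha\otimes(S\beta)=\alpha\otimes(\beta\oplus1)=(\alpha\otimes\beta)\oplus\alpha$ by distributivity of $\otimes$ over $\oplus$ together with $\alpha\otimes1=\alpha$. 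Moreover each exponentiation, and each multiplication other than $\otimes$, is continuous in its right operand, while all of the operations are at least weakly monotone in each operand (immediate from the Cantor-normal-form descriptions). These facts are exactly what the induction will exploit.

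First I would establish part (1) directly by transfinite induction on $\beta$. The successor step uses that $\alpha\oplus1=S\alpha$, so $\alpha+(S\beta)$ and $\alpha\oplus(S\beta)$ are the successors of $\alpha+\beta$ and $\alpha\oplus\beta$ respectively; the limit step uses that $\alpha+\beta=\sup_{\beta'<\beta}(\alpha+\beta')$ while $\alpha\oplus\beta$ dominates each $\alpha\oplus\beta'\ge\alpha+\beta'$.

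Then I would isolate one monotone-lifting lemma and apply it four times for the remaining inequalities. The lemma asserts: if $\ast_1\le\ast_2$ pointwise, if $\ast_1$ is weakly increasing in its left operand, if $\circ_1$ and $\circ_2$ agree at $\beta=0$ and both obey the successor recursion above, and if $\circ_1$ is continuous in $\beta$ while $\circ_2$ is weakly increasing in $\beta$, then $\alpha\circ_1\beta\le\alpha\circ_2\beta$ for all $\beta$. Its proof is a transfinite induction on $\beta$: at successors, $\alpha\circ_1(S\beta)=(\alpha\circ_1\beta)\ast_1\alpha\le(\alpha\circ_2\beta)\ast_1\alpha\le(\alpha\circ_2\beta)\ast_2\alpha=\alpha\circ_2(S\beta)$ by the inductive hypothesis, left-monotonicity of $\ast_1$, and $\ast_1\le\ast_2$; at limits, $\alpha\circ_1\beta=\sup_{\beta'<\beta}(\alpha\circ_1\beta')\le\sup_{\beta'<\beta}(\alpha\circ_2\beta')\le\alpha\circ_2\beta$, the last step by weak monotonicity of $\circ_2$ in $\beta$. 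Applying the lemma with $(\ast_1,\ast_2,\circ_1,\circ_2)$ equal to $(+,\oplus,\text{ordinary }\cdot,\times)$ yields $\alpha\beta\le\alpha\times\beta$ (feeding in part (1)); to $(\oplus,\oplus,\times,\otimes)$ yields $\alpha\times\beta\le\alpha\otimes\beta$; to $(\cdot,\times,\text{ordinary exp},\text{Jacobsthal exp})$ yields $\alpha^\beta\le\alpha^{\times\beta}$ (feeding in part (2)); and to $(\times,\otimes,\text{Jacobsthal exp},\text{super-Jacobsthal exp})$ yields $\alpha^{\times\beta}\le\alpha^{\otimes\beta}$.

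The main obstacle will be the limit step for $\alpha\times\beta\le\alpha\otimes\beta$, where $\ast_1=\ast_2=\oplus$: here the two sides differ precisely because $\times$ is continuous in $\beta$ whereas $\otimes$ is not, so one cannot compare them termwise, and the pointwise hypothesis $\ast_1\le\ast_2$ is vacuous. This is exactly what the lemma's asymmetric hypotheses accommodate: continuity of $\circ_1$ makes $\alpha\times\beta$ a supremum, while weak monotonicity of $\circ_2$ makes $\alpha\otimes\beta$ an upper bound of $\{\alpha\times\beta':\beta'<\beta\}$, so the inequality survives the discontinuity. The only remaining wrinkle is that the hypothesis ``$\circ_2$ weakly increasing in $\beta$'' fails in part (3) when $\alpha=0$, since $\alpha^{\times0}=1>0=\alpha^{\times1}$; I would dispose of $\alpha\in\{0,1\}$ separately, where all three exponentials coincide, and invoke the lemma only for $\alpha\ge2$ (at the multiplication level no such exception arises, as the relevant operations are weakly increasing in $\beta$ even for $\alpha=0$).
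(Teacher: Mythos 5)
Your proposal is correct and takes essentially the same route as the paper: the paper likewise proves $\alpha+\beta\le\alpha\oplus\beta$ and $\alpha\times\beta\le\alpha\otimes\beta$ by transfinite induction on $\beta$, with the successor step resting on the recursion $\alpha\otimes(S\beta)=(\alpha\otimes\beta)\oplus\alpha$ and the limit step on continuity of the left-hand operation versus weak monotonicity of the right-hand one in $\beta$, and then observes that the remaining inequalities follow by the same transfinite induction from the definitions. Your only departure is organizational --- you factor the four parallel inductions into a single explicit monotone-lifting lemma and treat the $\alpha\in\{0,1\}$ edge case separately, where the paper runs two inductions inline and dispatches the other two with ``it then immediately follows.''
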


The inequalities $\alpha+\beta\le\alpha\oplus\beta$ and
$\alpha\beta\le\alpha\otimes\beta$ are well known; the inequalities
$\alpha\beta\le\alpha\times\beta$ and $\alpha^\beta\le\alpha^{\times\beta}$ are
due to Jacobsthal \cite{jacobsthal}.  We will give proofs of all of the above
nonetheless.

\begin{proof}
First we prove that $\alpha+\beta\le \alpha\oplus\beta$, by induction on
$\beta$.  If $\beta=0$, both sums are equal to $\alpha$.  If $\beta=S\gamma$,
then by the inductive hypothesis,
\[ \alpha+\beta=S(\alpha+\gamma)\le S(\alpha\oplus\gamma)=\alpha\oplus\beta.\]
Finally, if $\beta$ is a limit ordinal, then since $\alpha\oplus\beta$ is
increasing in $\beta$, we have that
\[ \alpha\oplus\beta \ge \sup_{\gamma<\beta} (\alpha\oplus\gamma)
\ge \sup_{\gamma<\beta} (\alpha+\gamma) = \alpha+\beta. \]
So $\alpha+\beta\le\alpha\oplus\beta$.  It then immediately follows from
transfinite induction and the definitions of each that
$\alpha\beta\le\alpha\times\beta$, and $\alpha^\beta\le\alpha^{\otimes\beta}$.

Next we prove that $\alpha\times\beta\le\alpha\otimes\beta$, again by induction
on $\beta$.  If $\beta=0$, both products are equal to $0$.  If $\beta=S\gamma$,
then by the inductive hypothesis,
\[ \alpha\times\beta=(\alpha\times\gamma)\oplus\alpha\le
(\alpha\otimes\gamma)\oplus\alpha = \alpha\otimes\beta.\]
Finally, if $\beta$ is a limit ordinal, then since $\alpha\otimes\beta$ is
(possibly weakly) increasing in $\beta$, we have that
\[ \alpha\otimes\beta \ge \sup_{\gamma<\beta} (\alpha\otimes\gamma)
\ge \sup_{\gamma<\beta} (\alpha\times\gamma) = \alpha\times\beta. \]
So $\alpha\times\beta\le\alpha\otimes\beta$.  It then immediately follows from
transfinite induction and the definitions of each that
$\alpha^{\times\beta}\le\alpha^{\otimes\beta}$.  This completes the proof.
\end{proof}

Of course, this is not the only possible proof.  For instance, all the above
inequalities could also be proven by comparing Cantor normal forms.  Perhaps
more meaningfully, the inequalities $\alpha+\beta\le\alpha\oplus\beta$ and
$\alpha\beta\le\alpha\otimes\beta$ also both follow immediately from the
order-theoretic interpretation of these operations.  This leaves the question of
order-theoretic proofs of the other inequalities.  Lipparini's order-theoretic
interpretation \cite{lipparini2} of $\alpha\times\beta$ does immediately make it
clear that $\alpha\beta\le\alpha\times\beta$ -- indeed, it shows more generally
that $\sum_i \alpha_i \le \bigoplus_i \alpha_i$.  However, it does not seem to
immediately prove that $\alpha\times\beta\le\alpha\otimes\beta$, so finding an
order-theoretic proof there remains a problem.

\begin{quest}
Can the inequality $\alpha\times\beta\le\alpha\otimes\beta$, and part (3) of
Proposition~\ref{ineq}, be proven by giving order-theoeretic interpretations to
all the quantities involved?  What about the infinitary analogue of part (2)?
\end{quest}

All these inequalities hold equally well, of course, for the infinitary versions
of these operations.  Also, note that if we had a natural exponentiation
$e(\alpha,\beta)$, the same the same style of argument used above to prove
$\alpha+\beta\le\alpha\oplus\beta$ and $\alpha\times\beta\le \alpha\otimes\beta$
could also be used to prove $\alpha^{\otimes\beta}\le e(\alpha,\beta)$, in
accordance with Table~\ref{thetable}.  But, as we showed in the previous
section, there is no natural expoentiation.  However, if one is willing to look
a little bit outside of the ordinals, this line of reasoning could be used to
prove that $\alpha^{\otimes\beta}$ is pointwise at most the surreal exponential
discussed in Section~\ref{noexpsec}.

\subsection*{Acknowledgements} The author is grateful to Juan Arias de Reyna for
suggesting the reference \cite{bachmann}, to P.~Lipparini for suggesting the
reference \cite{1999}, and to J.~C.~Lagarias for help with editing.  The author
is grateful to an anonymous reviewer for suggesting substantial improvements to
the paper.  Work of the author was supported by NSF grants DMS-0943832 and
DMS-1101373.

\end{document}